\allowdisplaybreaks \numberwithin{equation}{section}
\numberwithin{equation}{section}
\newtheorem{theorem}{Theorem}[section]
\newtheorem{lemma}[theorem]{Lemma}
\theoremstyle{definition}
\theoremstyle{remark}
\newtheorem{remark}[theorem]{Remark}
\begin{document}

	\title
	[Vortex sheets for the 2D incompressible Euler equation]{Co-rotating and traveling vortex sheets for the 2D incompressible Euler equation} 
	
	\author{Daomin Cao, Guolin Qin, Changjun Zou}
	
	\address{Institute of Applied Mathematics, Chinese Academy of Sciences, Beijing 100190, and University of Chinese Academy of Sciences, Beijing 100049,  P.R. China}
	\email{dmcao@amt.ac.cn}
	\address{Institute of Applied Mathematics, Chinese Academy of Sciences, Beijing 100190, and University of Chinese Academy of Sciences, Beijing 100049,  P.R. China}
	\email{qinguolin18@mails.ucas.edu.cn}
	\address{Institute of Applied Mathematics, Chinese Academy of Sciences, Beijing 100190, and University of Chinese Academy of Sciences, Beijing 100049,  P.R. China}
	\email{zouchangjun17@mails.ucas.ac.cn}
	
	\thanks{This work was supported by NNSF of China Grant 11831009 and Chinese Academy of Sciences (No. QYZDJ-SSW-SYS021).}

	\begin{abstract}
		We construct co-rotating and traveling vortex sheets for 2D incompressible Euler equation, which are supported on several small closed curves. These examples represent a new type of vortex sheet solutions other than two known classes. The construction is based on Birkhoff-Rott operator, and accomplished by using implicit function theorem at point vortex solutions with suitably chosen function spaces.
	\end{abstract}
	
	\maketitle{\small{\bf Keywords:} The Euler equation, vortex sheets, Birkhoff-Rott operator, implicit function theorem \\
		
	{\bf 2020 MSC} Primary: 76B47; Secondary: 35Q31, 76B03.}

\section{Introduction and main results}

The vorticity formulation of 2D incompressible Euler equation in the whole space reads as follows
\begin{align}\label{1-1}
	\begin{cases}
		\partial_t\vartheta+\mathbf{v}\cdot \nabla \vartheta =0&\text{in}\ \mathbb{R}^2\times (0,T),
		\\
		\vartheta\big|_{t=0}=\vartheta_0 &\text{in}\ \mathbb{R}^2,
	\end{cases}
\end{align}
where $\mathbf{v}=(v_1,v_2)$ is the velocity field, and $\vartheta=curl \mathbf{v}:=\partial_1v_2-\partial_2v_1$ is the vorticity function describing the rotating of fluid. Using the incompressible condition $\nabla\cdot \mathbf{v}=0$, the velocity can be recovered by the Biot-Savart law from $\vartheta$:
$$
\ \mathbf{v}=\nabla^\perp(-\Delta)^{-1}\vartheta,
$$
where $(x_1,x_2)^\perp=(x_2,-x_1)$ and the operator $(-\Delta)^{-1}$ is given by
\begin{equation*}
	(-\Delta)^{-1}\vartheta(x)=\int_{\mathbb{R}^2}G(x- y)\vartheta(y)dy.
\end{equation*}
Here $G(x,y)=\frac{1}{2\pi}\ln \frac{1}{|x-y|}$ is the fundamental solution of $-\Delta$ in $\mathbb{R}^2$.

In this paper, we are concerned with the existence of co-rotating and traveling vortex sheet solutions to \eqref{1-1}. Generally speaking, a vortex sheet is a weak solution in $\mathcal M(\mathbb R^2)\cap H^{-1}(\mathbb R^2)$ concentrated on simple closed curves $\Gamma_i:=\{\boldsymbol z_i(t,x)\in \mathbb R^2 \, : \, x\in [0,2\pi) \}$ with vortex strength $\omega_i$, namely
\begin{equation*}
	\vartheta(t,\boldsymbol x)=\sum_{i=0}^{m-1}\omega_i(t,x)\boldsymbol\delta_{\Gamma_i(t,\boldsymbol z)},
\end{equation*}
where $\omega_i(t,x)$ is the vorticity strength with respect to the parametrization $\boldsymbol z_i$, and $\boldsymbol\delta_{\Gamma}$ is the Dirac measure on the curve $\Gamma$. The above form of vorticity is understood in the sense that
\begin{equation*}
	\int_{\mathbb R^2}\varphi(x)d\vartheta(t, \boldsymbol x)=\sum_{i=0}^{m-1}\int_0^{2\pi} \varphi(\boldsymbol z_i(t,x))\omega_i(t,x)dx, \ \ \ \ \ \ \forall \, \varphi(x)\in C_c^\infty(\mathbb R^2).
\end{equation*}

In the formulation of a vortex sheet solution $\vartheta(t,\boldsymbol x)$, we take $\gamma(t,x)=\omega(t,x)/|\partial_x\boldsymbol z(t,x)|$ be the regularized vortex strength. According to the Birkhoff-Rott operator, the velocity field generated by the vortex sheet $\Gamma$ is given by
\begin{equation}\label{1-2}
	\mathbf v(\boldsymbol z(t,x),t)=-\mathcal{BR}(\boldsymbol z,\gamma):=-\frac{1}{2\pi}PV\int_0^{2\pi}\frac{(\boldsymbol z(t,x)-\boldsymbol z(t,y))^\perp}{|\boldsymbol z(t,x)-\boldsymbol z(t,y)|^2}\gamma(t,y)dy,
\end{equation}
with $PV$ the notation of principle value. In the fluid near a sheet, the respective limits of velocity on one side of $\Gamma_i$ is denoted by $\mathbf v_i^+$, while on the other is denoted by $\mathbf v_i^-$. Although the velocity field has a ``jump" across $\Gamma_i$ which causes some kind of singularity, we can define the velocity $\mathbf v_{\Gamma_i}$ on $\Gamma_i$ by imposing $\mathbf v_{\Gamma_i}=(\mathbf v_i^++\mathbf v_i^-)/2$. Using above notations, the evolution of $\boldsymbol z(t,x)$ and $\gamma(t,x)$ can be described by
\begin{equation}\label{1-3}
	\partial_t \boldsymbol z(t,x)=\mathbf v(\boldsymbol z,t)+c(x,t)\partial_x\boldsymbol z(x,t),
\end{equation}
\begin{equation}\label{1-4}
	\partial_t\gamma(t,x)=\partial_x(c(t,x)\gamma(t,x)),
\end{equation}
where $c(t,x)$ is the reparametrization freedom of the sheet curve.

The study of vortex sheets arises from the phenomenon of discontinuity for velocity field across an interface, which is caused by small viscosity flow passing rigid walls or sharp corners. It is observed that although the viscosity plays an important role in
the generation of vortex sheets, the later transportation and evolution of vortices can be modeled by the incompressible Euler equation \cite{Bir,MB}. However, the vortex sheet structure is proved to be extremely unstable, which leads to coherent vortex pattern often found in mixing layers, jets and wakes.

Before discussing our main topic of constructing special vortex sheets, let us first take some time to review the effort on existence of solutions to \eqref{1-1} in different settings: In 1963, Yudovich \cite{Yud} proved the global well-posedness of \eqref{1-1} with the initial data in $L^1\cap L^\infty$. This result covers the ``vortex patch" initial data solution, but is not applicable to more singular cases. For an $L_{\text{loc}}^2$ initial data whose vorticity has a definite sign, Delort \cite{Del} and Majda \cite{Maj} proved the global existence of a weak solution separately. When the vorticity does not have a definite sign, by imposing reflection symmetry on the initial vorticity, the global existence was proved by Lopes Filho et al. in \cite{Lop}. In the case that initial data $\boldsymbol z(0,x)$, $\gamma(0,x)$ for a vortex sheet are analytic, Sulem et al. \cite{Sul} obtained local existence for the analytic solution, which corresponds to the cases we discuss in this paper. On the other hand, Birkhoff et al. \cite{Bir1,Bir2} conjectured that curvature may blow up in finite time for vortex sheets with analytic initial data. This was verified by asymptotic analysis of Moore \cite{Moo} and numerical simulations of Krasny \cite{Kra}, Meiron et al. \cite{Mei}. It is notable that the system \eqref{1-3} and \eqref{1-4} was proved to be ill-posed in $H^s$ for $s>3/2$ in \cite{Caf}. For more results on the well-posedness theory, we refer to \cite{MB,Saf,Wu} and references therein.

 Concerning the existence of global solutions (also known as relative equilibria) to \eqref{1-1}, many vivid examples whose vorticity is supported on compact areas are known, which can be roughly divided into two classes: one contains solutions with relatively large support, which are bifurcated from known solutions including disks, annuli, Kirchhoff ellipses or Lamb dipole, see \cite{Cas3,de2,Hmi2}; the other contains solutions with highly concentrated vorticity, which can be regarded as a regularization of point vortex solutions, see \cite{Cao2,Cao3,HM}. Different approaches were developed for construction of this type, such as study of contour dynamic equation, variational method, Lyapunov-Schmidlt reduction, etc.

However, for equation \eqref{1-1}, there are only two known classes of relative equilibria composed of vortex sheets other than circles and lines before our results which will be given later in this paper(see Theorem \ref{thm1} and Theorem \ref{thm2}). The first one is Protas-Sakajo class \cite{Pro}, which is made out of segments rotating about a common center of rotation with endpoints at the vertices of a regular polygon. Duchon and Robert \cite{Duc} discussed the global existence when the initial data near one example constructed in \cite{Bir1}, where the vorticity is supported on a segment of length $2l$ with density
\begin{equation*}
	\gamma(x)=\Omega\sqrt{l^2-x^2}, \ \ \ \ \ \ \text{for} \ \ x\in[-l,l]
\end{equation*}
and angular velocity $\Omega$. ($\Omega>0$ for counterclockwise)

Recently, G\'{o}mez-Serrano et al. \cite{Gom1} gave another vortex sheet supported on a closed curve. By Lyapunov-Schmidlt reduction and degenerate bifurcation, they obtained two curves of solution bifurcated from one trivial solution, which is a unit circle at specific angular velocity. This remarkable result indicates that solutions bifurcated from concentric circles can be constructed in the same way, and constitute the second class of vortex sheet solutions. In their follow-up work \cite{Gom2}, they have further showed that one can not expect vortex sheets other than concentric circles when $\omega\ge 0$ and $\Omega\le0$.

In the present paper, using the Birkhoff-Rott operator \eqref{1-1}, we will give a new class of vortex sheet solutions to \eqref{1-1}, whose support is on several small closed curves. Our strategy is to transform \eqref{1-3} and \eqref{1-4} into an integro-differential system, where well-located point vortices are observed to be trivial solutions. By extending the range of parameter $\varepsilon$ to a neighborhood of $0$, we hope that there exist nontrivial vortex sheet solutions near the trivial ones. However, having done the local linearization, we find the Gateaux derivatives do not give an isomorphism, which makes the implicit function theorem invalid. To obtain local curves of nontrivial vortex sheet solutions, a few techniques of subtle analysis are adopted. We will introduce a series of quotient function spaces for this problem, and let the angular velocity $\Omega$ and traveling speed $W$ vary with the unknown pair $(\boldsymbol z,\gamma)$. Then the isomorphism condition for linearized operator will prove to be equivalent to a dynamic relationship, which exactly corroborates our physical intuition. From this point of view, the existence of co-rotating and traveling vortex sheets can be established by the Implicit function theorem in a refined version. Since Dirac measure at one point is in $H^{-1-\tau}(\mathbb R^2)$ for any $\tau>0$, which is more singular than a vortex sheet, our result can also be regarded as regularization of point vortices in a special way.

To transform the construction of vortex sheets into a nonlinear problem of our preference, we introduce some necessary notations. For $\varepsilon\in (0,1/2)$, we denote
$$\Gamma^\varepsilon:=\{\boldsymbol z(t,x)\in \mathbb R^2 \, : \, x\in [0,2\pi) \}$$
 as a simple closed curve encircling the origin, which is close to the circle with radius $\varepsilon$ centered at the origin.  In the rest of this paper, the unit vector in $x_i$ direction ($i=1,2$) is denoted by $\boldsymbol e_i$. We also use $Q_\theta$ to denote the counterclockwise rotation operator of angle $\theta$ with respect to the origin, and use
$$\int\!\!\!\!\!\!\!\!\!\; {}-{} g(\tau)d\tau:=\frac{1}{2\pi}\int_0^{2\pi}g(\tau)d\tau$$
to denote the mean value of integral on the unit circle. For simplicity, we will omit the principal value notation $PV$ for possible singularities in the integral.

The first kind of relative equilibria we are looking for are co-rotating sheet solutions. Suppose the solution $\vartheta_\varepsilon$ is composed of $m$-fold symmetric sheets rotating in a uniform angular velocity $\Omega$ centered at $(d,0)$, that is
\begin{equation}\label{1-5}
	\vartheta_\varepsilon(t,\boldsymbol x)=\vartheta_{0,\varepsilon}\left(Q_{-\Omega t}(\boldsymbol x-d\boldsymbol e_1)+d\boldsymbol e_1\right)=\frac{1}{\varepsilon}\sum\limits_{i=0}^{m-1}\omega(t,x)\boldsymbol\delta_{\Gamma^\varepsilon_i(t,\boldsymbol z)},
\end{equation}
where $\Gamma_i^\varepsilon(t,\boldsymbol z)\subset \mathbb{R}^2$ are closed curves satisfying
\begin{equation*}
	\Gamma_i^\varepsilon(t,\boldsymbol z)-d\boldsymbol e_1=Q_{\frac{2\pi i}{m}-\Omega t}\left(\Gamma^\varepsilon(\boldsymbol z)-d\boldsymbol e_1\right),\,\,i=0,\cdots,m-1
\end{equation*}
with some $d>1$ fixed. In this rotating frame, the evolution equations \eqref{1-3} and \eqref{1-4} become
\begin{equation}\label{1-6}
	\partial_t \boldsymbol z(t,x)=\mathbf v(t,\boldsymbol z)+\Omega(\boldsymbol z(t,x)-d\boldsymbol e_1)+c(t,x)\partial_x\boldsymbol z(t,x),	
\end{equation}
and
\begin{equation}\label{1-7}
	\partial_t\gamma(t,x)=\partial_x(c(t,x)\gamma(t,x)),
\end{equation}
with $c(t,x)$ being the reparameterization freedom introduced before. Since $\boldsymbol z(t,x)$ parameterizes the same curve $\Gamma^\varepsilon$ as $\boldsymbol z(0,x)$, $\partial_t \boldsymbol z(t,x)$ is tangent to $\Gamma^\varepsilon$. Let $\mathbf n(\boldsymbol z(t,x))$ be the unit outerward vector at $\boldsymbol z(t,x)$, and $\mathbf s(\boldsymbol z(t,x))$ be the unit tangent vector counterclockwise.  Multiplying \eqref{1-6} by $\mathbf n(\boldsymbol z(t,x))$  and using $\mathbf n(\boldsymbol z(t,x))\cdot \partial_x\boldsymbol z(t,x)=0$, we obtain
\begin{equation}\label{1-8}
	\left(\mathbf{v}(t,\boldsymbol z)+\Omega(\boldsymbol z(t,x)-d\boldsymbol e_1)^\perp\right)\cdot \mathbf n(\boldsymbol z(t,x))=0, \ \ \ \forall \, \boldsymbol z(t,x) \in\Gamma^\varepsilon.
\end{equation}
To eliminate time parameter and derive another necessary relationship, we choose
\begin{equation*}
	c(t,x)=-\frac{(\mathbf{v}(t,\boldsymbol z)+\Omega(\boldsymbol z(t,x)-d\boldsymbol e_1)^\perp)\cdot \mathbf s(\boldsymbol z(t,x))}{| \partial_x\boldsymbol z(t,x)|},
\end{equation*}
and hence obtain $\partial_t\boldsymbol z(t,x)\cdot \mathbf s(\boldsymbol z(t,x))=0$ by multiplying $\mathbf s(\boldsymbol z(t,x))$ to \eqref{1-6}. Note that $\mathbf s(\boldsymbol z(t,x))$ is parallel to $\partial_t\boldsymbol z(t,x)$ and has unit length. So it holds $\partial_t\boldsymbol z(t,x)=0$, which implies that $\boldsymbol z(t,x)=\boldsymbol z(x)$ remains fixed in time. On the other hand, $\vartheta_\varepsilon$ is stationary in the rotating frame. Since $\boldsymbol z(t,x)=\boldsymbol z(x)$ is invariant, the regularized vortex strength will not evolve with time, or namely $\gamma(t,x)=\gamma(x)$. From \eqref{1-7}, we derive
\begin{equation*}
	c(t,x)\gamma(t,x)=C.
\end{equation*}
According to the definition of $c(t,x)$ given above, we have
\begin{equation}\label{1-9}
	\frac{(\mathbf{v}(\boldsymbol z)+\Omega(\boldsymbol z(x)-d\boldsymbol e_1)^\perp)\cdot \mathbf s(\boldsymbol z(x))\gamma(x)}{| \partial_x\boldsymbol z(x)|}=-C, \ \ \ \forall \, x\in [0,2\pi).
\end{equation}
Now we can assume that $\boldsymbol z(x)$ is parameterized as
$$\boldsymbol{z}(x)=\left(\varepsilon R(x)\cos(x), \varepsilon R(x)\sin(x)\right)$$ for some function $R(x)$ close to $1$. From \eqref{1-8} \eqref{1-9} and the definition of Birkhoff-Rott operator \eqref{1-2}, if we let
\begin{equation*}
	\begin{split}
	    F_1&(\varepsilon,\Omega,R,\gamma)=\Omega(\varepsilon R(x)R'(x)-dR'(x)\cos(x)+dR(x)\sin(x)) \\
	    &+\frac{1}{\varepsilon}\int\!\!\!\!\!\!\!\!\!\; {}-{} \frac{R(x)R'(x)-R'(x)R(y)\cos(x-y)+R(x)R(y)\sin(x-y)}{(R(x)-R(y))^2+4R(x)R(y)\sin^2(\frac{x-y}{2})}\times\gamma(y)dy\\
	    &+\sum_{i=1}^{m-1}\int\!\!\!\!\!\!\!\!\!\; {}-{} \frac{R(x)R'(x)-R'(x)R(y)\cos(x-y-\frac{2\pi i}{m})+R(x)R(y)\sin(x-y-\frac{2\pi i}{m})}{\left| \left(\boldsymbol{z}(x)-(d,0)\right)-Q_{\frac{2\pi i}{m}}\left(\boldsymbol{z}(y)-(d,0)\right)\right|^2}\times\gamma(y)dy\\
	    &-\sum_{i=1}^{m-1}\int\!\!\!\!\!\!\!\!\!\; {}-{} \frac{dR(x)\left(1-\cos(\frac{2\pi i}{m})\right)\sin(x)-dR'(x)\left(1-\cos(\frac{2\pi i}{m})\right)\cos(x)}{\left| \left(\boldsymbol{z}(x)-(d,0)\right)-Q_{\frac{2\pi i}{m}}\left(\boldsymbol{z}(y)-(d,0)\right)\right|^2}\times\gamma(y)dy,
	\end{split}
\end{equation*}
and
\begin{equation*}
	\begin{split}
		\tilde F_2&(\varepsilon,\Omega,R,\gamma)=\frac{\gamma(x)\Omega}{R'(x)^2+R(x)^2}\cdot(-\varepsilon R(x)^2+dR(x)\cos(x)+dR'(x)\sin(x))\\
		&+\frac{1}{\varepsilon}\frac{\gamma(x)}{R'(x)^2+R(x)^2}\int\!\!\!\!\!\!\!\!\!\; {}-{} \frac{-R(x)^2+R(x)R(y)\cos(x-y)+R'(x)R(y)\sin(x-y)}{(R(x)-R(y))^2+4R(x)R(y)\sin^2(\frac{x-y}{2})}\times\gamma(y)dy\\
		&+\sum_{i=1}^{m-1}\frac{\gamma(x)}{R'(x)^2+R(x)^2}\int\!\!\!\!\!\!\!\!\!\; {}-{} \frac{-R(x)^2+R(x)R(y)\cos(x-y-\frac{2\pi i}{m})+R'(x)R(y)\sin(x-y-\frac{2\pi i}{m})}{\left| \left(\boldsymbol{z}(x)-(d,0)\right)-Q_{\frac{2\pi i}{m}}\left(\boldsymbol{z}(y)-(d,0)\right)\right|^2}\times\gamma(y)dy\\
		&-\sum_{i=1}^{m-1}\frac{\gamma(x)}{R'(x)^2+R(x)^2}\int\!\!\!\!\!\!\!\!\!\; {}-{} \frac{dR(x)\left(1-\cos(\frac{2\pi i}{m})\right)\cos(x)+dR'(x)\left(1-\cos(\frac{2\pi i}{m})\right)\sin(x)}{\left| \left(\boldsymbol{z}(x)-(d,0)\right)-Q_{\frac{2\pi i}{m}}\left(\boldsymbol{z}(y)-(d,0)\right)\right|^2}\times\gamma(y)dy,
	\end{split}
\end{equation*}
then the construction of co-rotating solutions is transformed to finding $R(x)$ and $\gamma(x)$ such that
\begin{equation}\label{1-10}
\left\{
\begin{array}{l}
	F_1(\varepsilon,\Omega,R,\gamma)=0, \\
F_2(\varepsilon,\Omega,R,\gamma)=(I-\mathcal P)\tilde F_2=0,\\
\end{array}
\right.
\end{equation}
where $\mathcal P$ is the operator of projection to the mean, namely, $\mathcal Pg:=\int\!\!\!\!\!\!\!\!\!\; {}-{} g(x)dx$.

By choosing suitable quotient function spaces, we will show that the structure of $F_1$ and $F_2$ allows us to use implicit function theorem at $(\varepsilon,\Omega, R,\gamma)=(0,\Omega^*, 1,1)$ with
\begin{equation}\label{1-11}
	\Omega^*:=\frac{m-1}{2d^2},
\end{equation}
and thus establish the existence of a family of co-rotating vortex sheet solutions to \eqref{1-1} generated from $(0,\Omega^*, 1,1)$. One can also verify that $(0,\Omega^*, 1,1)$ corresponds to co-rotating $m$-fold point vortices solutions to \eqref{1-1}, where the intensity of every single point vortex is $2\pi$, the perimeter of unit circle. Now we can state our first result.
\begin{theorem}\label{thm1}
	Suppose that
$m\ge 2$. Then there exists $\varepsilon_0>0$ such that for any $\varepsilon\in (0,\varepsilon_0)$, \eqref{1-1} has a global co-rotating vortex sheet solution $\vartheta_\varepsilon(\boldsymbol x,t)=\vartheta_{0,\varepsilon}\left(Q_{-\Omega t}(\boldsymbol x-d\boldsymbol e_1)+d\boldsymbol e_1\right)$ defined in \eqref{1-5}, and the uniform angular velocity $\Omega_\varepsilon$ satisfies
	$$\Omega=\Omega^*+O(\varepsilon)$$
	with $\Omega^*$ given in \eqref{1-11}. Moreover, each component of the solution is supported on a $C^\infty$ closed curve with convex interior.
\end{theorem}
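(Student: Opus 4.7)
The plan is to solve the system \eqref{1-10} by an implicit function theorem argument at the degenerate point $(\varepsilon,\Omega,R,\gamma)=(0,\Omega^*,1,1)$, which corresponds to the classical co-rotating configuration of $m$ point vortices of circulation $2\pi$ placed at the vertices of a regular $m$-gon of radius $d$. Writing $R(x)=1+\eta(x)$ and $\gamma(x)=1+\xi(x)$, I would set up $(F_1,F_2)$ as an analytic map on a neighborhood of $(0,\Omega^*,0,0)$ in scales of Hölder (or Sobolev) spaces of $m$-fold symmetric functions that are also even in $x$; the residual reparametrization freedom of $\Gamma^\varepsilon$ is removed by passing to quotients of mean-zero functions, and $\Omega$ is treated as an unknown scalar, exactly as suggested by the authors in the introduction.

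\medskip
\noindent\textbf{Extension across $\varepsilon=0$.} The first technical point is that $F_1$ and $\tilde F_2$ carry explicit $1/\varepsilon$ factors in front of their self-interaction Birkhoff--Rott integrals. Inserting $R\equiv 1$, $\gamma\equiv 1$, the kernel appearing in $F_1$ collapses to
\begin{equation*}
\frac{\sin(x-y)}{4\sin^{2}\!\left(\tfrac{x-y}{2}\right)}=\tfrac{1}{2}\cot\!\left(\tfrac{x-y}{2}\right),
\end{equation*}
whose principal-value integral over $[0,2\pi)$ vanishes by oddness, while the analogous kernel in $\tilde F_2$ collapses to the constant $-1/2$, which is annihilated by $(I-\mathcal P)$ in the definition of $F_2$. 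A Taylor expansion of both kernels in $(\eta,\xi)$ then shows that $F_1$ and $F_2$ extend smoothly up to $\varepsilon=0$ as maps between the chosen spaces. At $\varepsilon=0$ the surviving mutual-interaction sums over $i=1,\dots,m-1$ reproduce the Kirchhoff--Routh identity for $m$ symmetric point vortices, which closes precisely at $\Omega=\Omega^{*}=(m-1)/(2d^{2})$, so $(0,\Omega^{*},1,1)$ is a zero of $(F_1,F_2)$.

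\medskip
\noindent\textbf{Invertibility of the linearization.} This step is the main obstacle. A direct computation shows that the Fréchet derivative $D_{(\Omega,\eta,\xi)}(F_1,F_2)$ at the trivial point splits into a dominant self-interaction part plus lower-order (in fact smoothing) contributions from the mutual interactions. Diagonalising the self-interaction part on the Fourier basis $\{e^{inx}\}$ with $m\mid n$, one finds it acts as an explicit $2\times 2$ block on each non-zero mode, with determinants that grow like $|n|$ and are therefore bounded away from zero; the mutual-interaction contribution is a compact perturbation and preserves the isomorphism on the complement of the zero Fourier mode. The zero mode, where the self-interaction is degenerate, is absorbed by the scalar unknown $\Omega$, and the solvability condition one obtains in that one-dimensional direction is nothing other than the equilibrium relation \eqref{1-11}; this realises the ``dynamical relationship'' anticipated in the introduction and is what makes the full linearised operator an isomorphism between the chosen quotient spaces.

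\medskip
\noindent\textbf{Conclusion.} Once this isomorphism is established, the implicit function theorem produces a $C^1$ branch $\varepsilon\mapsto (\Omega_\varepsilon,R_\varepsilon,\gamma_\varepsilon)$ of solutions of \eqref{1-10} with $R_\varepsilon=1+O(\varepsilon)$, $\gamma_\varepsilon=1+O(\varepsilon)$ and $\Omega_\varepsilon=\Omega^{*}+O(\varepsilon)$, valid for $\varepsilon\in(0,\varepsilon_0)$ with $\varepsilon_0$ small. The remaining $m-1$ sheets are generated by the $m$-fold rotation built into \eqref{1-5}. Smoothness of each component curve follows by a bootstrap: applying the isomorphism in a nested scale of spaces $C^{k,\alpha}$ with $k\to\infty$ (the kernels in $F_1,F_2$ become smoother on functions of higher regularity) upgrades $R_\varepsilon$ to $C^\infty$. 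Finally, convexity of the interior is automatic for $\varepsilon$ small, since $R_\varepsilon=1+O(\varepsilon)$ in $C^2$ makes $\Gamma^\varepsilon$ a small perturbation of a round circle.
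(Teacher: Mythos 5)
Your overall strategy (extend $(F_1,F_2)$ across $\varepsilon=0$, observe that the point-vortex configuration is the trivial zero, treat $\Omega$ as an extra unknown absorbing a one-dimensional degeneracy, then apply the implicit function theorem) is the same as the paper's, and your $\varepsilon\to0$ limits of the self-interaction kernels ($\tfrac12\cot(\tfrac{x-y}{2})$ for $F_1$, the constant $-\tfrac12$ for $\tilde F_2$) are correct. However, the key step — where the linearization degenerates and how $\Omega$ repairs it — is misidentified, and the function space you propose does not work.

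First, the degeneracy is \emph{not} at the zero Fourier mode. The zero mode is absent from the problem by construction: $F_1$ is odd and $F_2$ carries the projection $(I-\mathcal P)$, while the unknowns $f,g$ are mean-zero cosine series. Writing $R=1+\varepsilon f$, $\gamma=1+\varepsilon g$, the linearization at $(0,\Omega^*,0,0)$ acts on the $j$-th cosine mode by the matrix $M_j=\begin{pmatrix} -j/2 & 1/2 \\ (2-j)/2 & -1/2\end{pmatrix}$ with $\det M_j=(j-1)/2$; so the singular block is $j=1$, the translation mode of the small curve. Correspondingly, the one-dimensional solvability condition that the scalar $\Omega$ must absorb is the first-Fourier-coefficient compatibility $\int\!\!\!\!\!\!\!\!\!\; {}-{} F_1\sin(x)\,dx=-\int\!\!\!\!\!\!\!\!\!\; {}-{} F_2\cos(x)\,dx$ (the force balance on the vortex center), whose leading order gives $\Omega^*=(m-1)/(2d^2)$; this is implemented in the paper via the quotient spaces $\mathcal A^k,\mathcal B^k$ constraining the first coefficient and via Lemma \ref{lem2-4}, which makes $\Omega=\Omega^*+\varepsilon\,\Omega_\varepsilon(\varepsilon,f,g)$ depend on the unknowns. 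Second, restricting the profile of each small curve to $m$-fold symmetric Fourier modes is not admissible here: the mutual-interaction terms $F_{13},F_{14}$ produce a mode-$1$ component (e.g.\ $F_{14}=-\tfrac{m-1}{2d}\sin(x)+O(\varepsilon)$) irrespective of the symmetry of $f,g$, so the map does not preserve that subspace; $m$-fold symmetry lives in the arrangement of the $m$ curves, not in the profile $R(x)$ of a single one. With your space the mode-$1$ obstruction is invisible on the domain side yet present on the range side, so the linearized operator is not surjective and your claimed isomorphism fails. (Minor additional point: the paper works in analytic spaces on a strip, so no $C^{k,\alpha}$ bootstrap is needed for smoothness; your convexity argument agrees with the paper's.)
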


 Next we turn to the existence of traveling sheet solutions. In this case, the solution $\vartheta_\varepsilon$ is composed of two mirror symmetric sheets with opposite sign traveling at a uniform speed $W>0$ in $x_2$-direction, namely
\begin{equation}\label{1-12}
	\vartheta_\varepsilon(t,\boldsymbol x)=\vartheta_{0,\varepsilon}(\boldsymbol x-tW\boldsymbol e_2)=\frac{1}{\varepsilon}\omega(t,x)\boldsymbol\delta_{\Gamma^\varepsilon_O(t,\boldsymbol z)}-\frac{1}{\varepsilon}\omega(t,x)\boldsymbol\delta_{\Gamma^\varepsilon_T(t,\boldsymbol z)},
\end{equation}
where $\Gamma_O^\varepsilon(t,\boldsymbol z), \Gamma_T^\varepsilon(t,\boldsymbol z)\subset \mathbb{R}^2$ satisfy
\begin{equation*}
	\Gamma_O^\varepsilon(t,\boldsymbol z)=\Gamma^\varepsilon(\boldsymbol z)+tW\boldsymbol e_2,
\end{equation*}
and
\begin{equation*}
	\Gamma_T^\varepsilon(t,\boldsymbol z)=-\Gamma^\varepsilon(\boldsymbol z)+2d\boldsymbol e_1+tW\boldsymbol e_2
\end{equation*}
with $d>1$ the half distance of two sheets center. By a similar method as we investigate co-rotating sheet solutions, we can make $\vartheta_\varepsilon$ stationary in the translation frame, and $\boldsymbol z(t,x),\gamma(t,x)$ invariant of time. Hence the two dynamic equations for traveling sheet solutions are
\begin{equation}\label{1-13}
   \left(\mathbf{v}(\boldsymbol z)-W\boldsymbol e_2\right)\cdot \mathbf n(\boldsymbol z(x))=0, \ \ \ \forall \,  x\in [0,2\pi),
\end{equation}
and
\begin{equation}\label{1-14}
	\frac{(\mathbf{v}(\boldsymbol z)+\Omega(\boldsymbol z(x)-d\boldsymbol e_1)^\perp)\cdot \mathbf s(\boldsymbol z(x))\gamma(x)}{| \partial_x\boldsymbol z(x)|}=-C, \ \ \ \forall \, x\in [0,2\pi).
\end{equation}
By doing a same parameterization for $\boldsymbol z(x)$ as before and using Birkhoff-Rott operator \eqref{1-2}, we see that \eqref{1-13} and \eqref{1-14} are equivalent to
\begin{equation}\label{1-15}
\left\{
\begin{array}{l}
	G_1(\varepsilon,W,R,\gamma)=0, \\
 G_2(\varepsilon,W,R,\gamma)=(I-\mathcal P)\tilde G_2=0,
 \end{array}
 \right.
\end{equation}
where
\begin{equation*}
	\begin{split}
		G_1&(\varepsilon,W,R,\gamma)=-W\big(R(x)\sin(x)-R'(x)\cos(x)\big) \\
		&+\frac{1}{\varepsilon}\int\!\!\!\!\!\!\!\!\!\; {}-{} \frac{R(x)R'(x)-R'(x)R(y)\cos(x-y)+R(x)R(y)\sin(x-y)}{(R(x)-R(y))^2+4R(x)R(y)\sin^2(\frac{x-y}{2})}\times\gamma(y)dy\\
		&-\int\!\!\!\!\!\!\!\!\!\; {}-{} \frac{R(x)R'(x)+R'(x)R(y)\cos(x-y)-R(x)R(y)\sin(x-y)}{\left|(\varepsilon R(x)\cos(x)+\varepsilon R(y)\cos(y)-2d)^2+(\varepsilon R(x)\sin(x)+\varepsilon R(y)\sin(y))^2\right|^2}\times\gamma(y)dy\\
		&+\int\!\!\!\!\!\!\!\!\!\; {}-{} \frac{2dR(x)\sin(x)-2dR'(x)\cos(x)}{\left|(\varepsilon R(x)\cos(x)+\varepsilon R(y)\cos(y)-2d)^2+(\varepsilon R(x)\sin(x)+\varepsilon R(y)\sin(y))^2\right|^2}\times\gamma(y)dy,
	\end{split}
\end{equation*}
and
\begin{equation*}
	\begin{split}
		\tilde G_2&(\varepsilon,W,R,\gamma)=-\frac{\gamma(x)W}{R'(x)^2+R(x)^2}\cdot\big(R'(x)\sin(x)+R(x)\cos(x)\big)\\
		&+\frac{1}{\varepsilon}\frac{\gamma(x)}{R'(x)^2+R(x)^2}\int\!\!\!\!\!\!\!\!\!\; {}-{} \frac{-R(x)^2+R(x)R(y)\cos(x-y)+R'(x)R(y)\sin(x-y)}{(R(x)-R(y))^2+4R(x)R(y)\sin^2(\frac{x-y}{2})}\times\gamma(y)dy\\
		&+\frac{\gamma(x)}{R'(x)^2+R(x)^2}\int\!\!\!\!\!\!\!\!\!\; {}-{} \frac{R(x)^2+R(x)R(y)\cos(x-y)+R'(x)R(y)\sin(x-y)}{\left|(\varepsilon R(x)\cos(x)+\varepsilon R(y)\cos(y)-2d)^2+(\varepsilon R(x)\sin(x)+\varepsilon R(y)\sin(y))^2\right|^2}\gamma(y)dy\\
		&+\frac{\gamma(x)}{R'(x)^2+R(x)^2}\int\!\!\!\!\!\!\!\!\!\; {}-{} \frac{dR(x)\left(1-\cos(\frac{2\pi i}{m})\right)\cos(x)+dR'(x)\left(1-\cos(\frac{2\pi i}{m})\right)\sin(x)}{\left|(\varepsilon R(x)\cos(x)+\varepsilon R(y)\cos(y)-2d)^2+(\varepsilon R(x)\sin(x)+\varepsilon R(y)\sin(y))^2\right|^2}\gamma(y)dy.
	\end{split}
\end{equation*}

We can use implicit function theorem at $(\varepsilon,W, R,\gamma)=(0,W^*, 1,1)$ with
\begin{equation}\label{1-16}
	W^*:=\frac{1}{2d}
\end{equation}
  as before. Careful analysis gives a curve of nontrivial traveling vortex sheets passing the trivial one $(0,W^*, 1,1)$, which represents the traveling point vortex pair with opposite intensity $\pm2\pi$. This fact leads to our second theorem:
\begin{theorem}\label{thm2}
	There exists $\varepsilon_0>0$ such that for any $\varepsilon\in (0,\varepsilon_0)$, \eqref{1-1} has a global traveling vortex sheet solution $\vartheta_\varepsilon(\boldsymbol x,t)=\vartheta_{0,\varepsilon}(\boldsymbol x-tW\boldsymbol e_2)$ defined in \eqref{1-12}, and the uniform traveling speed $W$ satisfies
	$$W=W^*+O(\varepsilon).$$
    with $W^*$ given in \eqref{1-16}. Moreover, each component of the solution is supported on a $C^\infty$ closed curve with convex interior.
\end{theorem}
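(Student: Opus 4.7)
The plan is to mirror the argument given for Theorem \ref{thm1}, applying the implicit function theorem to the system $(G_1,G_2)=0$ in \eqref{1-15} at the trivial point $(\varepsilon, W, R, \gamma) = (0, W^*, 1, 1)$. This point corresponds to the pair of opposite point vortices of strengths $\pm 2\pi$ separated by distance $2d$, which indeed translate at uniform speed $W^* = 1/(2d)$; verifying that $(G_1,G_2)(0,W^*,1,1)=0$ is a direct calculation in which the mutual-interaction integral reduces to the standard point-vortex velocity field and the self-interaction integral vanishes by the oddness identity $\int\!\!\!\!\!\!\!\!\!\; {}-{}\tfrac{1}{2}\cot\tfrac{x-y}{2}\,dy = 0$. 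I would first set up H\"older (or analytic) function spaces adapted to the mirror symmetry of the traveling configuration, requiring $R-1$ to be even and $2\pi$-periodic and $\gamma-1$ to be even, while on the output side the projection $(I-\mathcal{P})$ built into $G_2$ already removes its mean.

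The main technical preparation, exactly as in Theorem \ref{thm1}, is to show that $(G_1,G_2)$ extends to a jointly smooth map across $\varepsilon=0$ despite the $\varepsilon^{-1}$ prefactor on the self-interaction integrals. The key observation is that those integrals vanish identically at $R\equiv 1$, $\gamma\equiv 1$, so after Taylor expanding in the deviations $(R-1,\gamma-1)$ their leading term is linear in the deviation. Combined with an $\varepsilon$-dependent rescaling of the unknowns, or equivalently the choice of quotient function spaces alluded to in the introduction, this absorbs the formal $\varepsilon^{-1}$ factor and produces the bounded linearized Birkhoff--Rott operator on the unit circle, plus a remainder that is regular in $\varepsilon$. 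The mutual-interaction terms centered at $(2d,0)$ are already regular at $\varepsilon=0$.

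The central obstacle is then to prove that $D_{(W,R,\gamma)}(G_1,G_2)\big|_{(0,W^*,1,1)}$ is an isomorphism between the chosen quotient spaces. Diagonalizing on the Fourier basis of the unit circle, the linearization should act as a multiplier with a non-vanishing symbol on all but a single low mode, which is \emph{a priori} degenerate; the role of the free parameter $W$ is precisely to cure this degeneracy. The variation $-\delta W\,\big(R(x)\sin(x)-R'(x)\cos(x)\big)$ in $G_1$ reduces at the base point to $-\delta W \sin(x)$, coupling $\delta W$ to the offending Fourier mode and restoring surjectivity, while the quotient by constants on the $\gamma$-side absorbs the gauge constant $C$ in \eqref{1-14}. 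This is the step at which the ``dynamic relationship'' referred to in the introduction gets verified: invertibility of the linearized map is equivalent to $W = W^*$, which is precisely how the base point has been selected.

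Once invertibility is established uniformly near $\varepsilon=0$, the implicit function theorem produces a $C^1$-curve $\varepsilon \mapsto (W(\varepsilon), R_\varepsilon, \gamma_\varepsilon)$ through $(W^*, 1, 1)$, defined on some interval $(0,\varepsilon_0)$, with $W(\varepsilon) = W^* + O(\varepsilon)$, giving the traveling vortex sheet solution $\vartheta_\varepsilon$ of \eqref{1-12}. The claimed $C^\infty$ smoothness of $R_\varepsilon$ and $\gamma_\varepsilon$, and hence of the supporting curves, together with the convexity of the interiors, then follow by a standard bootstrap on $G_1=G_2=0$ combined with the fact that curvature depends continuously on $R_\varepsilon$ and the unit circle $R\equiv 1$ is strictly convex, so convexity survives for $\varepsilon$ sufficiently small.
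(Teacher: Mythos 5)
Your proposal is correct and follows essentially the same route as the paper: reduce the traveling problem to the functional framework of Theorem \ref{thm1} (the self-interaction parts of $(G_1,G_2)$ coincide with those of $(F_1,F_2)$, and the mutual-interaction parts are those of the $m=2$ case up to sign), verify regularity across $\varepsilon=0$ via the scaling $R=1+\varepsilon f$, $\gamma=1+\varepsilon g$, identify the rank-one degeneracy of the linearization on the first Fourier mode, cure it with the speed parameter, and conclude by the implicit function theorem. The one structural difference is how $W$ enters. You augment the linearized unknowns by $\delta W$ and use $\partial_W(G_1,\tilde G_2)\big|_{(0,W^*,1,1)}=(-\sin x,-\cos x)$, whose mode-one contribution $(-1,-1)$ lies outside the range of $M_1$, to restore surjectivity; the paper instead first solves $W=W^*+\varepsilon W_\varepsilon(\varepsilon,f,g)$ so that $\mathbf G$ maps into the quotient target $\mathcal B^k$ and then inverts $D\mathbf G:\mathcal A^k\to\mathcal B^k$ as in Lemma \ref{lem2-3}. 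These are equivalent, but note that in your version surjectivity alone is not enough: augmenting a rank-one deficient $2\times2$ block by one parameter leaves a one-dimensional kernel along $(1,1)$, so you must still impose the domain constraint $a_1+b_1=0$ (i.e.\ work in $\mathcal A^k$) to get an isomorphism. Two smaller points: invertibility of the linearization is not ``equivalent to $W=W^*$'' --- the matrices $M_j$ are independent of $W$, and $W^*=1/(2d)$ is instead forced by the zeroth-order solvability (the point-vortex pair must actually travel at that speed, which is the compatibility condition defining $\mathcal B^k$ at $\varepsilon=0$); and the paper obtains smoothness, indeed analyticity, of the supporting curves for free by working in the analytic spaces $X^k$, $Y^k$, so no bootstrap is required.
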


\begin{remark}
	Since $R(x)$ and $\gamma(x)$ will be chosen in analytic spaces, the support curves for vortex sheets obtained in Theorem \ref{thm1} and \ref{thm2} are not only $C^\infty$ smooth, but also real analytic. For vortex patch in 2D incompressible Euler flow, the regularity of boundary is an interesting topic. Chemin \cite{Che} and Bertozzi et al. \cite{Ber} showed that the patch boundary will persist $C^{1,\alpha}$ regularity. As a special case, Hmidi et al. \cite{Hmi} proved the $C^\infty$ boundary regularity for relative equilibria bifurcated from disks. Different from the patch case, the regularity will not be preserved for a general vortex sheet intial data, see \cite{Bir,Bir1}.
\end{remark}

 It is notable that since there is no semilinear elliptic problem corresponding to vortex sheet, traditional variational method for vortex patch does not work here. Our method for concentrative vortex sheets is novel and widely applicable: various solutions can be constructed in this way, where each local cluster of sheet is supported on small concentric circles or has other different locations. However, the system will be more complicated in foresaid situations and it takes much computation time. We also bring our readers' attention to the fact that besides the existence, the local uniqueness for relative equilibria of such type remains another open problem.

 This paper is organized, as follows: In Section 2, we introduce the functional settings and a special form of Taylor's formula. Then we will prove the existence of co-rotating vortex sheets in Section 3. The complete proof contains three essential elements: the verification of regularity for the integro-differential system, the linearization of system, and the application of implicit function theorem. In Section 4, the existence of traveling vortex sheets will be given by a brief discussion, since it is similar to that of the co-rotating case.

\section{Functional settings and Taylor's formula}
For some small constant $a>0$, let $C_w(a)$ denote the space of analytic functions in the strip $|\mathbf{Im} (z)|\le a$. To apply the implicit function theorem, we define the even function spaces
\begin{equation*}
	X^k=\left\{ u\in C_w(a), \ u(x)= \sum\limits_{j=1}^{\infty}a_j\cos(jx), \ \sum_{\pm}\int_0^{2\pi}|u(x\pm a\mathbf{i})|^2+|\partial^k u(x\pm a\mathbf{i})|^2dx\le +\infty\right\},
\end{equation*}
and the odd function spaces
\begin{equation*}
	Y^k=\left\{ u\in C_w(a), \ u(x)= \sum\limits_{j=1}^{\infty}a_j\sin(jx), \ \sum_{\pm}\int_0^{2\pi}|u(x\pm a\mathbf{i})|^2+|\partial^k u(x\pm a\mathbf{i})|^2dx\le +\infty\right\},
\end{equation*}
where the norm of $X^k$, $Y^k$ is the standard $H^k$ norm on the strip $|\mathbf{Im} (z)|\le a$, namely, for some $u_1\in X^k$ and $u_2\in Y^k$
\begin{equation*}
	\|u_1\|_{X^k}=\|u_1(x\pm a\mathbf{i})\|_{H^k} \ \ \ \text{and} \ \ \ 	\|u_2\|_{Y^k}=\|u_2(x\pm a\mathbf{i})\|_{H^k}.
\end{equation*}
Our construction has a distinguished feature: due to the spectral structure on Fourier series, the linearized operator is no longer an isomorphism on above spaces. To remediate to this weak point, we define the quotient function spaces
$$\mathcal {A}^{k}:=\Bigg\{(u, v)\in X^{k+1}\times X^k \Big| \int\!\!\!\!\!\!\!\!\!\; {}-{} u(x)\cos(x)dx=-\int\!\!\!\!\!\!\!\!\!\; {}-{} v(x)\cos(x)dx\Bigg\},$$
$$\mathcal {B}^{k}:=\Bigg\{(u, v)\in Y^k\times X^k \Big| \int\!\!\!\!\!\!\!\!\!\; {}-{} u(x)\sin(x)dx=-\int\!\!\!\!\!\!\!\!\!\; {}-{} v(x)\cos(x)dx\Bigg\},$$
which give a restriction on the first Fourier coefficient. On the other hand, in the first step we extend the range of $\varepsilon$ from $(0,1/2)$ to $(-1/2,1/2)$, where the functionals should prove to be well-defined. To eliminate possible singularity and derive asymptotic expansion for $F_1,F_2$ and $G_1,G_2$, we will frequently use the following Taylor's formula in the proof of regularity part.
\begin{equation}\label{2-1}
	\frac{1}{(A+B)^\lambda}=\frac{1}{A^\lambda}-\lambda\int_0^1\frac{B}{(A+tB)^{1+\lambda}}dt.
\end{equation}

In the next two sections, we will consider the existence of co-rotating and traveling vortex sheets solutions separately. Since the problem is transformed to solving an integro-differential system, we will focus on dealing with relevant functionals.

\section{Existence of co-rotating  vortex sheets}
To normalize our problem, we extend the range of $\varepsilon$ to $(-\frac{1}{2},\frac{1}{2})$. Suppose that $R(x)=1+\varepsilon f(x)$, $\gamma(x)=1+\varepsilon g(x)$
with $x\in [0,2\pi)$, and $f, g$ being two $C^1$ functions. From discussion in the previous section, we will consider following functionals
\begin{equation*}
	F_1(\varepsilon,\Omega,f,g)=F_{11}+F_{12}+F_{13}+F_{14},
\end{equation*}
where
\begin{equation}\label{2-2}
	F_{11}=\Omega\left(\varepsilon^2(1+\varepsilon f(x))f'(x)-\varepsilon f'(x)d\cos(x)+(1+\varepsilon f(x))d\sin(x)\right),
\end{equation}

\begin{equation}\label{2-3}
	\begin{split}
	F_{12}&=\frac{1}{\varepsilon}\int\!\!\!\!\!\!\!\!\!\; {}-{} \frac{\varepsilon(1+\varepsilon f(x))f'(x)(1-\cos(x-y))(1+\varepsilon g(y))}{ \varepsilon^2\left(f(x)-f(y)\right)^2+4(1+\varepsilon f(x))(1+\varepsilon f(y))\sin^2\left(\frac{x-y}{2}\right)} dy\\
	& \ \ \ +\frac{1}{\varepsilon}\int\!\!\!\!\!\!\!\!\!\; {}-{} \frac{\varepsilon^2 f'(x)(f(x)-f(y))\cos(x-y)(1+\varepsilon g(y))}{ \varepsilon^2\left(f(x)-f(y)\right)^2+4(1+\varepsilon f(x))(1+\varepsilon f(y))\sin^2\left(\frac{x-y}{2}\right)}dy\\
	& \ \ \ -\frac{1}{\varepsilon}\int\!\!\!\!\!\!\!\!\!\; {}-{} \frac{\varepsilon(1+\varepsilon f(x))(f(x)-f(y))\sin(x-y)(1+\varepsilon g(y))}{ \varepsilon^2\left(f(x)-f(y)\right)^2+4(1+\varepsilon f(x))(1+\varepsilon f(y))\sin^2\left(\frac{x-y}{2}\right)}dy\\
	& \ \ \ +\frac{1}{\varepsilon}\int\!\!\!\!\!\!\!\!\!\; {}-{} \frac{(1+\varepsilon f(x))^2\sin(x-y)(1+\varepsilon g(y))}{ \varepsilon^2\left(f(x)-f(y)\right)^2+4(1+\varepsilon f(x))(1+\varepsilon f(y))\sin^2\left(\frac{x-y}{2}\right)}dy\\
	&=F_{121}+F_{122}+F_{123}+F_{124},
	\end{split}
\end{equation}

\begin{equation}\label{2-4}
	\begin{split}
		F_{13}&=\sum_{i=1}^{m-1}\int\!\!\!\!\!\!\!\!\!\; {}-{} \frac{\varepsilon(1+\varepsilon f(x))f'(y)(1+\varepsilon g(y))}{\left| \left(\boldsymbol{z}(x)-(d,0)\right)-Q_{\frac{2\pi i}{m}}\left(\boldsymbol{z}(y)-(d,0)\right)\right|^2}dy\\
		&\ \ \ -\sum_{i=1}^{m-1}\int\!\!\!\!\!\!\!\!\!\; {}-{} \frac{\varepsilon f'(x)(1+\varepsilon f(y))\cos(x-y-\frac{2\pi i}{m})(1+\varepsilon g(y))}{\left| \left(\boldsymbol{z}(x)-(d,0)\right)-Q_{\frac{2\pi i}{m}}\left(\boldsymbol{z}(y)-(d,0)\right)\right|^2}dy\\
		& \ \ \ +\sum_{i=1}^{m-1}\int\!\!\!\!\!\!\!\!\!\; {}-{} \frac{(1+\varepsilon f(x))(1+\varepsilon f(x))\sin(x-y-\frac{2\pi i}{m})(1+\varepsilon g(y))}{\left| \left(\boldsymbol{z}(x)-(d,0)\right)-Q_{\frac{2\pi i}{m}}\left(\boldsymbol{z}(y)-(d,0)\right)\right|^2}dy,
	\end{split}
\end{equation}

\begin{equation}\label{2-5}
	\begin{split}
	 F_{14}&=\sum_{i=1}^{m-1}\int\!\!\!\!\!\!\!\!\!\; {}-{} \frac{d(1+\varepsilon f(x))\left(1-\cos(\frac{2\pi i}{m})\right)\sin(x)(1+\varepsilon g(y))}{\left| \left(\boldsymbol{z}(x)-(d,0)\right)-Q_{\frac{2\pi i}{m}}\left(\boldsymbol{z}(y)-(d,0)\right)\right|^2}dy\\
	& \ \ \ -\sum_{i=1}^{m-1}\int\!\!\!\!\!\!\!\!\!\; {}-{} \frac{\varepsilon df'(x)\left(1-\cos(\frac{2\pi i}{m})\right)\cos(x)(1+\varepsilon g(y))}{\left| \left(\boldsymbol{z}(x)-(d,0)\right)-Q_{\frac{2\pi i}{m}}\left(\boldsymbol{z}(y)-(d,0)\right)\right|^2}dy,
	\end{split}
\end{equation}
and
\begin{equation*}
	F_2(\varepsilon,\Omega,f,g)=(I-\mathcal P)\tilde F_2=(I-\mathcal P)(\tilde F_{21}+\tilde F_{22}+\tilde F_{23}+\tilde F_{24}),
\end{equation*}
where
\begin{equation}\label{2-6}
	\tilde F_{21}=\frac{\Omega(1+\varepsilon g(x))}{\varepsilon^2f'(x)^2+(1+\varepsilon f(x))^2}\cdot(-\varepsilon (1+\varepsilon f(x))^2+(1+\varepsilon f(x))d\cos(x)+\varepsilon f'(x)d\sin(x)),
\end{equation}

\begin{equation}\label{2-7}
	\begin{split}
		\tilde F_{22}&=\frac{1}{\varepsilon}\cdot\frac{(1+\varepsilon g(x))}{\varepsilon^2f'(x)^2+(1+\varepsilon f(x))^2}\int\!\!\!\!\!\!\!\!\!\; {}-{} \frac{(1+\varepsilon f(x))(1+\varepsilon f(y))(\cos(x-y)-1)(1+\varepsilon g(y))}{ \varepsilon^2\left(f(x)-f(y)\right)^2+4(1+\varepsilon f(x))(1+\varepsilon f(y))\sin^2\left(\frac{x-y}{2}\right)}dy\\
		& \ \ \ -\frac{1}{\varepsilon}\cdot\frac{(1+\varepsilon g(x))}{\varepsilon^2f'(x)^2+(1+\varepsilon f(x))^2}\int\!\!\!\!\!\!\!\!\!\; {}-{} \frac{\varepsilon^2 f'(x)(f(x)-f(y))\sin(x-y)(1+\varepsilon g(y))}{\varepsilon^2\left(f(x)-f(y)\right)^2+4(1+\varepsilon f(x))(1+\varepsilon f(y))\sin^2\left(\frac{x-y}{2}\right)}dy\\
		& \ \ \ -\frac{1}{\varepsilon}\cdot\frac{(1+\varepsilon g(x))}{\varepsilon^2f'(x)^2+(1+\varepsilon f(x))^2}\int\!\!\!\!\!\!\!\!\!\; {}-{} \frac{\varepsilon(1+\varepsilon f(x))(f(x)-f(y))(1+\varepsilon g(y))}{\varepsilon^2\left(f(x)-f(y)\right)^2+4(1+\varepsilon f(x))(1+\varepsilon f(y))\sin^2\left(\frac{x-y}{2}\right)}dy\\
		& \ \ \ +\frac{1}{\varepsilon}\cdot\frac{(1+\varepsilon g(x))}{\varepsilon^2f'(x)^2+(1+\varepsilon f(x))^2}\int\!\!\!\!\!\!\!\!\!\; {}-{} \frac{\varepsilon f'(x)(1+\varepsilon f(x))\sin(x-y)(1+\varepsilon g(y))}{\varepsilon^2\left(f(x)-f(y)\right)^2+4(1+\varepsilon f(x))(1+\varepsilon f(y))\sin^2\left(\frac{x-y}{2}\right)}dy\\
		&=F_{221}+F_{222}+F_{223}+F_{224},\\
	\end{split}
\end{equation}

\begin{equation}\label{2-8}
	\begin{split}
		\tilde F_{23}&=\frac{(1+\varepsilon g(x))}{\varepsilon^2f'(x)^2+(1+\varepsilon f(x))^2}\sum_{i=1}^{m-1}\int\!\!\!\!\!\!\!\!\!\; {}-{} \frac{(1+\varepsilon f(x))(1+\varepsilon f(y))\cos(x-y-\frac{2\pi i}{m})(1+\varepsilon g(y))}{\left| \left(\boldsymbol{z}(x)-(d,0)\right)-Q_{\frac{2\pi i}{m}}\left(\boldsymbol{z}(y)-(d,0)\right)\right|^2}dy\\
		& \ \ \ +\frac{(1+\varepsilon g(x))}{\varepsilon^2f'(x)^2+(1+\varepsilon f(x))^2}\sum_{i=1}^{m-1}\int\!\!\!\!\!\!\!\!\!\; {}-{} \frac{\varepsilon f'(x)(1+\varepsilon f(y))\sin(x-y-\frac{2\pi i}{m})(1+\varepsilon g(y))}{\left| \left(\boldsymbol{z}(x)-(d,0)\right)-Q_{\frac{2\pi i}{m}}\left(\boldsymbol{z}(y)-(d,0)\right)\right|^2}dy,\\
		& \ \ \ -\frac{(1+\varepsilon g(x))}{\varepsilon^2f'(x)^2+(1+\varepsilon f(x))^2}\sum_{i=1}^{m-1}\int\!\!\!\!\!\!\!\!\!\; {}-{} \frac{(1+\varepsilon f(x))^2(1+\varepsilon g(y))}{\left| \left(\boldsymbol{z}(x)-(d,0)\right)-Q_{\frac{2\pi i}{m}}\left(\boldsymbol{z}(y)-(d,0)\right)\right|^2}dy,
	\end{split}
\end{equation}

\begin{equation}\label{2-9}
	\begin{split}
		\tilde F_{24}&=-\frac{(1+\varepsilon g(x))}{\varepsilon^2f'(x)^2+(1+\varepsilon f(x))^2}\sum_{i=1}^{m-1}\int\!\!\!\!\!\!\!\!\!\; {}-{} \frac{d(1+\varepsilon f(x))\left(1-\cos(\frac{2\pi i}{m})\right)\cos(x)(1+\varepsilon g(y))}{\left| \left(\boldsymbol{z}(x)-(d,0)\right)-Q_{\frac{2\pi i}{m}}\left(\boldsymbol{z}(y)-(d,0)\right)\right|^2}dy\\
		& \ \ \ -\frac{(1+\varepsilon g(x))}{\varepsilon^2f'(x)^2+(1+\varepsilon f(x))^2}\sum_{i=1}^{m-1}\int\!\!\!\!\!\!\!\!\!\; {}-{} \frac{\varepsilon df'(x)\left(1-\cos(\frac{2\pi i}{m})\right)\sin(x)(1+\varepsilon g(y))}{\left| \left(\boldsymbol{z}(x)-(d,0)\right)-Q_{\frac{2\pi i}{m}}\left(\boldsymbol{z}(y)-(d,0)\right)\right|^2}dy.
	\end{split}
\end{equation}
We also denote $F_{2i}=(I-\mathcal{P})\tilde F_{2i}$ for $i=1,2,3,4$. The proof will be split into several lemmas, each of which is associated with checking the hypotheses of the implicit function theorem.

Let $V^r$ be the open neighborhood of zero in $\mathcal{A}^k$
\begin{equation*}
	V^r:=\left\{(f,g)\in \mathcal{A}^k : \  \|f\|_{X^{k+1}}<r, \  \|g\|_{X^k}<r\right\}
\end{equation*}
with $0<r<1$ and $k\ge 3$. In the following two lemmas, we investigate the regularity of $F_1(\varepsilon, \Omega, f,g)$, $F_2(\varepsilon, \Omega, f,g)$, and show the extension for $\varepsilon$ to $ \left(-\frac{1}{2}, \frac{1}{2}\right)$ is reasonable.
\begin{lemma}\label{lem2-1}
	$F_1(\varepsilon, \Omega, f, g): \left(-\frac{1}{2}, \frac{1}{2}\right)\times \mathbb{R} \times V^r \rightarrow Y^k$ and $F_2(\varepsilon, \Omega, f, g): \left(-\frac{1}{2}, \frac{1}{2}\right)\times \mathbb{R} \times V^r \rightarrow X^k$ are both continuous.
\end{lemma}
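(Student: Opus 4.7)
The plan is to reduce the continuity claim to analyzing each of the eight pieces $F_{1j},F_{2j}$ ($j=1,\dots,4$) displayed in \eqref{2-2}--\eqref{2-9} separately, then to verify the correct parity so that the outputs land in $Y^k$ and $X^k$. For every $(f,g)\in V^r$ one has $R(x)=1+\varepsilon f(x)\in(1-r,1+r)$ on the strip $|\mathbf{Im}\,z|\le a$, so all the ``remote'' denominators $|(\boldsymbol z(x)-(d,0))-Q_{2\pi i/m}(\boldsymbol z(y)-(d,0))|^2$ for $i=1,\dots,m-1$ are uniformly bounded away from zero (using $d>1$ and $\varepsilon\in(-\tfrac12,\tfrac12)$). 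Consequently, $F_{13},F_{14},F_{23},F_{24}$ are analytic functionals of $(\varepsilon,\Omega,f,g)$ taking values in the analytic class $C_w(a)$, and standard Sobolev-on-the-strip composition estimates give continuity into $Y^k$ resp.\ $X^k$ without difficulty.

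The delicate pieces are the ``self-interaction'' integrals $F_{12}$ and $F_{22}$, whose denominator
\begin{equation*}
D(x,y):=\varepsilon^2(f(x)-f(y))^2+4(1+\varepsilon f(x))(1+\varepsilon f(y))\sin^2\!\tfrac{x-y}{2}
\end{equation*}
degenerates on the diagonal $y=x$, while an overall factor $\tfrac1\varepsilon$ needs to be tamed (in particular in $F_{124}$, whose numerator does not supply a compensating $\varepsilon$). I would apply the Taylor identity \eqref{2-1} with $A=4\sin^2(\tfrac{x-y}{2})$ and $B=D(x,y)-A$ to write $\tfrac{1}{D}=\tfrac{1}{A}+\varepsilon\,\mathcal R(\varepsilon,f)(x,y)$, where the remainder $\mathcal R$ is controlled by $|f(x)-f(y)|^2/\sin^2(\tfrac{x-y}{2})$ and by $(|f(x)|+|f(y)|)$, both of which are bounded on $V^r$ by analyticity of $f$. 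The leading piece of $F_{124}$ then reduces to $\tfrac{1}{2\varepsilon}\int\!\!\!\!\!\!\!\!\!\; {}-{} \cot(\tfrac{x-y}{2})\,dy$, which vanishes identically as a principal value, so no genuine $\tfrac1\varepsilon$ singularity remains; the $\varepsilon$ from $\mathcal R$ cancels the prefactor in the remainder. An analogous decomposition treats $F_{121}$--$F_{123}$ (where an extra $\varepsilon$ is already present in the numerator) and the four pieces of $\tilde F_{22}$. Once this is in place, every integrand becomes analytic in $y$ on the strip (after subtracting the explicit $\cot$-kernel), and the usual argument for Cauchy-type operators on analytic spaces gives boundedness in $H^k$ on $|\mathbf{Im}\,z|\le a$.

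Having established pointwise and norm regularity for each piece, I would check parity under $x\mapsto-x$ combined with the substitution $y\mapsto-y$ in the integrals: using that $f,g$ are even and $f'$ is odd, one verifies that every summand of $F_1$ is odd in $x$ and every summand of $\tilde F_2$ is even. The projector $I-\mathcal P$ then removes the zero mode from $\tilde F_2$, so $F_2$ lands in the zero-mean subspace of $X^k$, while $F_1$ lands in $Y^k$. Continuity of $(\varepsilon,\Omega,f,g)\mapsto F_i$ follows because each building block (analytic composition with $R=1+\varepsilon f$ and $\gamma=1+\varepsilon g$, division by quantities bounded below, principal-value integration against $\cot(\tfrac{x-y}{2})$, and the Taylor remainder $\mathcal R$) is continuous on $V^r$.

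The main obstacle is the $\tfrac1\varepsilon$ extraction in $F_{124}$ and its $\tilde F_{22}$ analogue: one must verify, uniformly in $(f,g)\in V^r$ and $\varepsilon\in(-\tfrac12,\tfrac12)$, that Taylor-expanding $\tfrac{1}{D}$ about $\tfrac{1}{4\sin^2((x-y)/2)}$ produces either a principal-value integrand that integrates to zero or a remainder with a spare factor of $\varepsilon$. Everything else is standard analytic composition on Sobolev spaces over the strip.
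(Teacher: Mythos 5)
Your proposal follows essentially the same route as the paper: the Taylor identity \eqref{2-1} with $A=4\sin^2(\tfrac{x-y}{2})$ to split off the singular kernel, the vanishing of the principal value $\int\!\!\!\!\!\!\!\!\!\; {}-{}\cot(\tfrac{x-y}{2})\,dy$ to kill the $\tfrac1\varepsilon$ in $F_{124}$, lower bounds on the remote denominators for $F_{13},F_{14},F_{23},F_{24}$, and the parity check via $y\mapsto-y$. The paper additionally carries out the explicit $H^k$-on-the-strip estimates for the most singular term $F_{122}$ (and the difference quotients $I_1,I_2,I_3$ for continuity in $f$), which you defer to ``standard'' arguments, but the underlying idea is identical.
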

\begin{proof}
	Let us start with $F_{11}$. From \eqref{2-1}, it is obvious that $F_{11}: \left(-\frac{1}{2}, \frac{1}{2}\right)\times \mathbb{R} \times V^r \rightarrow X^k$ is continuous. We can rewrite
	$F_{11}$ as
	\begin{equation}\label{2-10}
		F_{11}=\Omega \left(d\sin(x)+\varepsilon\mathcal{R}_{11}(\varepsilon,f,g)\right),
	\end{equation}
	where $\mathcal{R}_{11}(\varepsilon,f): \left(-\frac{1}{2}, \frac{1}{2}\right)\times \mathbb{R} \times V^r \rightarrow X^k$ is continuous.
	
	The next step is to study $F_{12}$. To begin with, we show $F_{12}$ is odd. Notice that $f(x)\in X^{k+1}$, $g(x)\in X^k$ are both even functions and $f'(x)\in Y^{k-1}$ is odd. By substituting $-y$ for $y$ in $F_{12}$, we can deduce that $F_{12}(\varepsilon,f,g)$ is odd with respect to $x$.
	
	Then we are going to prove that the range of $F_{12}$ is in $Y^k$. Since $R(x)=1+\varepsilon f(x)$ and $\gamma(x)=1+\varepsilon g(x)$, the possible singularity for $\varepsilon=0$ may occur only when we take zeroth derivative of $F_{12}$. More precisely, it may occur only in $F_{124}$. To prove this case can not happen, we use Taylor's formula \eqref{2-1} to do following decomposition
	\begin{equation}\label{2-11}
		\frac{1}{{ \varepsilon^2\left(f(x)-f(y)\right)^2+4(1+\varepsilon f(x))(1+\varepsilon f(y))\sin^2\left(\frac{x-y}{2}\right)}}=\frac{1}{4\sin^2\left(\frac{x-y}{2}\right)}+\varepsilon K_\varepsilon(x,y),
	\end{equation}
	where we let $A=4\sin^2\left(\frac{x-y}{2}\right)$ and $B$ the remaining terms in the denominator. As for this decomposition, an important fact is that $K_\varepsilon(x,y)$ is not singular at $x=y$. Since $\sin(\cdot)$ is odd, it holds
	\begin{equation*}
		F_{124}=\frac{1}{\varepsilon}\int\!\!\!\!\!\!\!\!\!\; {}-{} \frac{\sin(x-y)}{4\sin^2\left(\frac{x-y}{2}\right)}dy+\varepsilon\mathcal R_{124}(\varepsilon,f,g)
=\varepsilon\mathcal R_{124}(\varepsilon,f,g),
	\end{equation*}
	where $\mathcal R_{124}(\varepsilon,f,g)$ is not singular at $\varepsilon=0$.
	
	Make the change of variable $x-y \rightarrow y$, and take $k$-$th$ partial derivatives of $F_{12}$ with respect to $x$. Since we have $1-\cos(y)=2\sin\left(\frac{y}{2}\right)$, for $\partial^kF_{121}$ it holds
	\begin{equation*}
		\partial^kF_{121}=\frac{1}{2}\int\!\!\!\!\!\!\!\!\!\; {}-{}(1+\varepsilon f(x))\partial^{k+1}f(x)(1+\varepsilon g(x-y))dy+\frac{1}{2}\int\!\!\!\!\!\!\!\!\!\; {}-{} \varepsilon(1+\varepsilon f(x))f'(x)\partial^kg(x-y)dy+l.o.t.,
	\end{equation*}
where we have used decomposition \eqref{2-11} and $l.o.t.$ represents all lower order terms. Since $f(x)\in X^{k+1}$, $g(x)\in X^k$ and $k\ge3$, we have $\|\partial^if(x\pm a\mathbf{i})\|_{L^\infty}\le C \|f\|_{X^{k+1}}<\infty$ and $\|\partial^ig(x\pm a\mathbf{i})\|_{L^\infty}\le C \|g\|_{X^k}<\infty$ for $i=0,1,2$. Using H\"older inequality and mean value theorem, we can conclude that
	\begin{equation*}
		\|	\partial^kF_{121}(x\pm a\mathbf{i})\|_{L^2}\le C\|f\|_{X^{k+1}}+C\|g\|_{X^k}<\infty.
	\end{equation*}

    We now turn to the most singular term $\partial^k F_{122}$, which has an expansion as
    \begin{equation*}
    	\begin{split}
    	\partial^k &F_{122}=\int\!\!\!\!\!\!\!\!\!\; {}-{} \frac{\varepsilon \partial^{k+1}f(x)(f(x)-f(x-y))\cos(y)(1+\varepsilon g(x-y))}{ \varepsilon^2\left(f(x)-f(x-y)\right)^2+4(1+\varepsilon f(x))(1+\varepsilon f(x-y))\sin^2\left(\frac{y}{2}\right)} dy\\	
    	&  \ \ \ \ \ \ +\int\!\!\!\!\!\!\!\!\!\; {}-{} \frac{\varepsilon f'(x)(\partial^kf(x)-\partial^kf(x-y))\cos(y)(1+\varepsilon g(x-y))}{ \varepsilon^2\left(f(x)-f(x-y)\right)^2+4(1+\varepsilon f(x))(1+\varepsilon f(x-y))\sin^2\left(\frac{y}{2}\right)} dy\\
    	& -\int\!\!\!\!\!\!\!\!\!\; {}-{} \frac{2\varepsilon^2 \partial^{k+1}f(x)(f(x)-f(x-y))\cos(y)(1+\varepsilon g(x-y))}{\left( \varepsilon^2\left(f(x)-f(x-y)\right)^2+4(1+\varepsilon f(x))(1+\varepsilon f(x-y))\sin^2\left(\frac{y}{2}\right)\right)^2}\times \bigg(\varepsilon(f(x)-f(x-y))\\
    	& \ \ \ \ \ \ \ (f'(x)-f'(x-y))+2\big((1+\varepsilon f(x))f'(x-y)+f'(x)(1+\varepsilon f(x-y))\big)\sin^2(\frac{y}{2})\bigg)dy\\
    	& \ \ \ \ \ \ +\int\!\!\!\!\!\!\!\!\!\; {}-{} \frac{\varepsilon f'(x)(f(x)-f(x-y))\cos(y)(1+\varepsilon \partial^kg(x-y))}{ \varepsilon^2\left(f(x)-f(x-y)\right)^2+4(1+\varepsilon f(x))(1+\varepsilon f(x-y))\sin^2\left(\frac{y}{2}\right)} dy\\	
    	& \ \ \ \ \ \ \ +l.o.t.\\
    	& \ \ \ \ \ \ =\partial F_{1221}+\partial F_{1222}+\partial F_{1223}+\partial F_{1224}+l.o.t..
        \end{split}
    \end{equation*}
    We first deal with $\partial F_{1221}$. Using the decomposition \eqref{2-11} with $\|K_\varepsilon(x,y)\|_{L^\infty}\le C(\varepsilon,\|f\|_{x^{k+1}})<\infty$ and the fact $|y|^2$ is the equivalent infinitesimal for $4\sin^2\left(\frac{y}{2}\right)$ as $|y|\to 0$, it holds
    \begin{equation*}
    	\begin{split}
    		\|\partial F_{1221}(x&\pm a\mathbf{i})\|_{L^2}\le C\left\|\int\!\!\!\!\!\!\!\!\!\; {}-{} \frac{\varepsilon \partial^{k+1}f(x\pm a\mathbf{i})(f(x\pm a\mathbf{i})-f(x\pm a\mathbf{i}-y))\cos(y)(1+\varepsilon g(x\pm a\mathbf{i}-y))}{4\sin^2\left(\frac{y}{2}\right)}dy\right\|_{L^2}\\
    		&\le C\|f\|_{X^{k+1}}(1+\|g (x\pm a\mathbf{i})\|_{L^\infty})\left|\int\!\!\!\!\!\!\!\!\!\; {}-{} \frac{f(x\pm a\mathbf{i})-f(x\pm a\mathbf{i}-y)dy}{4\sin^2\left(\frac{y}{2}\right)}\right|\\
    		&\le C\|f\|_{X^{k+1}}(1+\|g\|_{X^k})\left|\int\!\!\!\!\!\!\!\!\!\; {}-{} \frac{f'(x\pm a\mathbf{i})+f''(x\pm a\mathbf{i})\cdot|y|}{2\sin\left(\frac{y}{2}\right)}dy\right|\\
    		&\le C\|f\|_{X^{k+1}}(1+\|g\|_{X^k})\|_{L^\infty})\int\!\!\!\!\!\!\!\!\!\; {}-{} f''(x\pm a\mathbf{i})dy\\
    		&\le C\|f\|_{X^{k+1}}^2(1+\|g\|_{X^k}),
    	\end{split}
    \end{equation*}
    where we have used the identity $\int\!\!\!\!\!\!\!\!\!\; {}-{} \frac{1}{\sin\left(\frac{y}{2}\right)}dy=0$. For the term $\partial F_{1222}$, one can use a similar method to obtain the upper bound
    \begin{equation*}
    	\|\partial F_{1222}(x\pm a\mathbf{i})\|_{L^2}\le C\|f\|_{X^{k+1}}^2(1+\|g\|_{X^k}).
    \end{equation*}
    Hence in the next we will focus on $\partial F_{1223}$. By H\"{o}lder inequality and mean value theorem, we deduce that
     \begin{equation*}
    	\begin{split}
    		\|\partial F_{1223}(x&\pm a\mathbf{i})\|_{L^2}\le C\|f\|_{X^{k+1}}(1+\|g\|_{X^k})\left|\int\!\!\!\!\!\!\!\!\!\; {}-{} \frac{f(x\pm a\mathbf{i})-f(x\pm a\mathbf{i}-y)dy}{4\sin^2\left(\frac{y}{2}\right)}\right| \\
    		& \ \ \ \times\big(\|f'(x\pm a\mathbf{i})\|_{L^\infty}\|f''(x\pm a\mathbf{i})\|_{L^\infty}+(1+\|f(x\pm a\mathbf{i})\|_{L^\infty})\|f'(x\pm a\mathbf{i})\|_{L^\infty}\big)\\
    		&\le C\|f\|_{X^{k+1}}^3(1+\|f\|_{X^{k+1}})(1+\|g\|_{X^k})
    	\end{split}
    \end{equation*}
    through a direct calculation. In the last, it is easy to obtain
     \begin{equation*}
    	\|\partial F_{1224}(x\pm a\mathbf{i})\|_{L^2}\le C\|f\|_{X^{k+1}}^2(1+\|g\|_{X^k}).
    \end{equation*}
    Since $F_{122}$ is the most singular term in $F_{12}$, it always holds $\|F_{12}\|_{Y^k}\le C\|F_{122}\|_{Y^k}\le\infty$ from above discussion, and we can conclude that the range of $F_{12}$ is in $Y^k$.

    To prove the continuity of $F_{12}$, we also consider the most singular term $F_{122}$. We will use following notations: for a general function $u$, we denote
    $$\Delta u=u(x)-u(y), \ \ \ u=u(x), \ \ \ \tilde u=u(y),$$
    and
    $$D(u)=\varepsilon^2\Delta u^2+4(1+\varepsilon u)(1+\varepsilon\tilde u)\sin^2(\frac{x-y}{2}).$$
    To show the continuity with respect to $f$, letting $(f_1,g),(f_2,g)\in V^r$, then we can calculate the difference
    \begin{equation*}
    	\begin{split}
    		F_{122}(\varepsilon, f_1,g)&-F_{122}(\varepsilon, f_2,g)=(f_1'-f_2')\int\!\!\!\!\!\!\!\!\!\; {}-{}\frac{\varepsilon\Delta f_1\cos(x-y)(1+\varepsilon\tilde g)dy}{D(f_1)}\\
    		&+f_2'\int\!\!\!\!\!\!\!\!\!\; {}-{}\frac{\varepsilon(\Delta f_1-\Delta f_2)\cos(x-y)(1+\varepsilon\tilde g)dy}{D(f_1)}\\
    		&+\left(\int\!\!\!\!\!\!\!\!\!\; {}-{}\frac{f_2'\Delta f_2\cos(x-y)(1+\varepsilon\tilde g)dy}{D(f_1)}-\int\!\!\!\!\!\!\!\!\!\; {}-{}\frac{f_2'\Delta f_2\cos(x-y)(1+\varepsilon\tilde g)dy}{D(f_2)}\right)\\
    		&:=I_1+I_2+I_3.
    	\end{split}
    \end{equation*}
	For the first two terms $I_1$ and $I_2$, by the technique we have used before it is easy to prove
$$\|I_1\|_{Y^k}\le C\|f_1-f_2\|_{X^{k+1}}$$
 and
 $$\|I_{2}\|_{Y^k}\le C\|f_1-f_2\|_{X^{k+1}}.$$
  For the last term $I_3$, since
	\begin{equation}\label{2-12}
		\begin{split}
			\frac{1}{D(f_1)}&-\frac{1}{D(f_2)}\\
			&=\frac{\varepsilon^2(\Delta f_2^2-\Delta f_1^2)+4\varepsilon\big((f_2-f_1)(1+\varepsilon\tilde f_2)+(\tilde f_2-\tilde f_1)(1+\varepsilon f_1)\big)\sin^2(\frac{x-y}{2})}{D(f_1)D(f_2)},
		\end{split}
	\end{equation}
    it holds
    \begin{equation*}
    	\begin{split}
    	\|\partial^kI_3(x\pm a\mathbf{i})\|_{L^2}&\le C\|f_2\|_{X^{k+1}}^2\big(\|f_1\|_{X^{k+1}}+\|f_2\|_{X^{k+1}}+C\big)\|f_1-f_2\|_{X^{k+1}}(1+\|g\|_{X^k})\\
    	&\le C\|f_1-f_2\|_{X^{k+1}}.
    	\end{split}
    \end{equation*}
    On the other hand, using the fact that $F_{122}$ is linear with respect to $g$, we can verify the continuity of $F_{122}$ on $g$ directly. Since $F_{122}$ is the most singular term in $F_{12}$, we have actually shown that $F_{12}(\varepsilon, f, g): \left(-\frac{1}{2}, \frac{1}{2}\right) \times V^r \rightarrow Y^k$ is continuous. For further use, we write $F_{12}$ as
    \begin{equation}\label{2-13}
    	F_{12}=\frac{1}{2}f'(x)+\int\!\!\!\!\!\!\!\!\!\; {}-{}\frac{g(y)\sin(x-y)}{4\sin^2\left(\frac{x-y}{2}\right)}dy+\varepsilon \mathcal R_{12}(\varepsilon,f,g),
    \end{equation}
	where $\mathcal{R}_{12}(\varepsilon,f,g): \left(-\frac{1}{2}, \frac{1}{2}\right)\times \mathbb{R} \times V^r \rightarrow X^k$ is regular.
	
	Having dealt with $F_{12}$, we are to study $F_{13}$ and $F_{14}$. Notice that for $\boldsymbol{z}\in \Gamma^\varepsilon$ and $i\ge 1$, $\left| \left(\boldsymbol{z}(x)-(d,0)\right)-Q_{\frac{2\pi i}{m}}\left(\boldsymbol{z}(y)-(d,0)\right)\right|$ has positive lower bounds. So $F_{13}$ and $F_{14}$ are less singular than $F_{12}$, and $F_{14}(\varepsilon, f,g),F_{14}(\varepsilon, f,g): \left(-\frac{1}{2}, \frac{1}{2}\right)\times V^r \rightarrow Y^k$ are both continuous. Using the Taylor's formula \eqref{2-1}, we also rewrite $F_{13}$ and $F_{14}$ as
	\begin{equation}\label{2-14}
		F_{13}=\varepsilon \mathcal R_{13}(\varepsilon,f,g),
	\end{equation}
	and
	\begin{equation}\label{2-15}
		\begin{split}
		F_{14}&=-\sum_{i=1}^{m-1} \frac{(1-\cos(\frac{2\pi i}{m}))\sin (x)}{\left((-1+\cos(\frac{2\pi i}{m}))^2 +\sin^2(\frac{2\pi i}{m})\right)d}+\varepsilon\mathcal{R}_{14}(\varepsilon,f,g)\\
		&=-\frac{m-1}{2d}\sin(x)+\varepsilon\mathcal{R}_{14}(\varepsilon,f,g)
		\end{split}
	\end{equation}
    with both $\mathcal R_{13}$, $\mathcal R_{14}$ being good terms.
	
	The continuity for $F_2(\varepsilon, \Omega, f, g): \left(-\frac{1}{2}, \frac{1}{2}\right)\times \mathbb{R} \times V^r \rightarrow X^k$ can be verified in a similar way. Notice that projection operator $I-\mathcal P$ (defined in \eqref{1-10}) eliminates all constant terms in $\tilde F_2$, and hence there is no singularity in $F_2=(I-\mathcal P)\tilde F_2$. According to \eqref{2-1}, the four terms in $F_2$ can be rewritten into the following form respectively
	\begin{equation}\label{2-16}
		F_{21}=\Omega(d\cos{x}+\mathcal R_{21}(\varepsilon,f,g)),
	\end{equation}
    \begin{equation}\label{2-17}
    	F_{22}=f(x)-\int\!\!\!\!\!\!\!\!\!\; {}-{}\frac{f(x)-f(y)}{4\sin^2\left(\frac{x-y}{2}\right)}dy-\frac{1}{2}g(x)+\varepsilon \mathcal R_{22}(\varepsilon,f,g),
    \end{equation}
	\begin{equation}\label{2-18}
		F_{23}=\varepsilon \mathcal R_{23}(\varepsilon,f,g),
	\end{equation}
	\begin{equation}\label{2-19}
		F_{24}=-\frac{m-1}{2d}\cos(x)+\varepsilon\mathcal{R}_{24}(\varepsilon,f,g),
	\end{equation}
    where $\mathcal R_{2i}$ ($1\le i\le 4$) are all continuous from $ \left(-\frac{1}{2}, \frac{1}{2}\right)\times V^r$ to $X^k$.
	
	Summing up the above, we complete the proof of this lemma.
\end{proof}

For $(\varepsilon,\Omega,f,g)\in \left(-\frac{1}{2}, \frac{1}{2}\right)\times \mathbb{R} \times V^r$ and $(h_1,h_2)\in X^{k+1}\times X^k$, let
\begin{equation*}
	\partial_fF_i(\varepsilon, \Omega, f,g)h_1:=\lim\limits_{t\to0}\frac{1}{t}\left(F_i(\varepsilon, \Omega, f+th_1,g)-F_i(\varepsilon, \Omega, f,g)\right)
\end{equation*}
and
\begin{equation*}
	\partial_gF_i(\varepsilon, \Omega, f,g)h_2:=\lim\limits_{t\to0}\frac{1}{t}\left(F_i(\varepsilon, \Omega, f,g+th_2)-F_i(\varepsilon, \Omega, f,g)\right)
\end{equation*}
be the Gateaux derivatives of $F_i(\varepsilon, \Omega, f,g)$ with $i=1,2$. Then we have following lemma:
\begin{lemma}\label{lem2-2}
	For each $(\varepsilon,\Omega,f,g)\in \left(-\frac{1}{2}, \frac{1}{2}\right)\times \mathbb{R} \times V^r$ and $(h_1,h_2)\in X^{k+1}\times X^k$,
	$$\partial_fF_1(\varepsilon, \Omega, f,g)h_1: \ X^{k+1}\to Y^k, \ \ \ \partial_gF_1(\varepsilon, \Omega, f,g)h_2:  \ X^k\to Y^k$$
	and
	$$\partial_fF_2(\varepsilon, \Omega, f,g)h_1: \ X^{k+1}\to X^k, \ \ \ \partial_gF_2(\varepsilon, \Omega, f,g)h_2: \ X^k\to X^k$$
	are continuous.
\end{lemma}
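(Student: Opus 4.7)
The plan is to verify the four continuity claims by computing the Gateaux derivatives explicitly and then mirroring, term by term, the estimates that were already carried out in the proof of Lemma~\ref{lem2-1}. Writing $F_i=F_{i1}+F_{i2}+F_{i3}+F_{i4}$ for $i=1,2$, I would differentiate each piece $F_{ij}$ formally in $f$ and in $g$ by the usual chain rule under the integral. The pieces $F_{i1}$ are algebraic in $(\Omega,f,g)$ and their derivatives are smooth. The pieces $F_{i3}$ and $F_{i4}$ are governed by kernels of the form $|(\boldsymbol z(x)-d\boldsymbol e_1)-Q_{2\pi i/m}(\boldsymbol z(y)-d\boldsymbol e_1)|^{-2}$ with $i\ge 1$, which are bounded above on $V^r$ for $\varepsilon\in(-\tfrac12,\tfrac12)$; hence one may differentiate under the integral sign, and continuity into $Y^k$ (resp.\ $X^k$) follows from dominated convergence plus the standard product rule in the analytic strip.

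The serious work is in $F_{12}$ and $F_{22}$. The $g$-derivatives are benign: $F_{12}$ and $F_{22}$ depend linearly on $g$ (through the factors $1+\varepsilon g(x)$ and $1+\varepsilon g(y)$), so $\partial_g F_1$ and $\partial_g F_2$ are simply the same integrals with the $g$-factor replaced by $h_2$, and the estimates of Lemma~\ref{lem2-1} apply verbatim. For $\partial_f F_{122}$ and $\partial_f F_{22}$ the formal derivative produces two types of terms: (a) contributions from differentiating the numerator of the integrand, which are of the same singularity class as $F_{122}$ itself; and (b) contributions from differentiating the denominator
\[
D(f)=\varepsilon^2\,\Delta f^2+4(1+\varepsilon f)(1+\varepsilon\tilde f)\sin^2\!\bigl(\tfrac{x-y}{2}\bigr),
\]
which yield $-(\partial_f D(f)[h_1])/D(f)^2$ inside the integral and look one order more singular. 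The key observation is that
\[
\partial_f D(f)[h_1]=2\varepsilon^2\Delta f(h_1(x)-h_1(y))+4\varepsilon\bigl[h_1(x)(1+\varepsilon\tilde f)+h_1(y)(1+\varepsilon f)\bigr]\sin^2\!\bigl(\tfrac{x-y}{2}\bigr),
\]
so every summand carries either an extra factor of $\sin^2((x-y)/2)$ or of $\Delta f$, which cancels exactly one power of the denominator's singularity. After this cancellation the integrand has the same singular structure as $F_{122}$, and the Taylor decomposition \eqref{2-11}, the mean value theorem, H\"older's inequality on the strip $|\mathbf{Im}(z)|\le a$, and the identity $\int\!\!\!\!\!\!\!\!\!\; {}-{} 1/\sin(y/2)\,dy=0$ give the bound $\|\partial_f F_{122}[h_1]\|_{Y^k}\le C\|h_1\|_{X^{k+1}}$ and analogously for $\partial_f F_{22}[h_1]$ into $X^k$. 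The required parities (odd for $F_1$, even for $F_2$) are inherited from $F_i$ under $y\mapsto -y$.

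Continuity of these derivatives as functions of $(\varepsilon,\Omega,f,g)$ then repeats the end of the proof of Lemma~\ref{lem2-1}: differences at $(f_1,g_1)$ and $(f_2,g_2)$ split into a piece where only the numerator is perturbed (handled directly) and a piece where only $1/D(f)$ is perturbed, and the identity \eqref{2-12} reduces the latter to a quantity proportional to $\|f_1-f_2\|_{X^{k+1}}+\|g_1-g_2\|_{X^k}$. The main obstacle I anticipate is the bookkeeping for $\partial_f F_{122}$ and $\partial_f F_{22}$: one must expand $\partial_f D(f)[h_1]$ summand by summand, track which of the $k$ derivatives in $x$ falls on $h_1$, on $f$, on $\Delta f$, or on $\sin^{\pm 2}((x-y)/2)$, and check that in every case one retains a factor sufficient to compensate the denominator. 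No analytic tool beyond those already deployed in Lemma~\ref{lem2-1} is required; the difficulty is purely the combinatorics of the chain rule on a weakly singular kernel.
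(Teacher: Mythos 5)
Your overall plan matches the paper's: differentiate term by term, note that $F_{i1}$ is algebraic and $F_{i3},F_{i4}$ have nonsingular kernels, exploit linearity in $g$, and concentrate the work on $F_{122}$ and $F_{22}$. The cancellation you isolate in $\partial_fD(f)[h_1]$ --- each summand carrying either a factor $\sin^2\!\bigl(\tfrac{x-y}{2}\bigr)$ or a product $\Delta f\,\Delta h_1$ vanishing to second order at $x=y$, so that exactly one power of $D(f)$ in the denominator is compensated --- is exactly the mechanism behind the paper's bounds, and your continuity-in-$(f,g)$ argument via \eqref{2-12} is the paper's as well.

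There is, however, one step that your proposal skips and that the paper treats as the main content of this lemma: verifying that the Gateaux derivatives are \emph{well-defined}, i.e.\ that
\begin{equation*}
\lim_{t\to0}\left\|\frac{F_{12i}(\varepsilon,f+th_1,g)-F_{12i}(\varepsilon,f,g)}{t}-\partial_fF_{12i}\right\|_{Y^k}=0,\qquad i=1,2,3,4.
\end{equation*}
You compute the formal derivative under the integral and bound it, but bounding a candidate operator does not show that the difference quotients converge to it in $Y^k$ (resp.\ $X^k$); for a singular kernel this convergence is not automatic. The paper's proof decomposes the remainder for the worst term $F_{122}$ into four pieces $\partial_f^2F_{1221},\dots,\partial_f^2F_{1224}$ and bounds each by $Ct$, which requires going \emph{one order beyond} \eqref{2-12}: one must estimate
$\frac{1}{D(f+th_1)}-\frac{1}{D(f)}+t\,\frac{\partial_fD(f)[h_1]}{D(f)^2}$
by $\frac{t^2}{4\sin^2((x-y)/2)}\,\xi(\varepsilon,f,h_1)$ with $\xi$ bounded on the strip, again using the $\sin^2$/$\Delta f$ cancellation to tame the resulting $D(f)^{-3}$-type terms. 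This second-order Taylor estimate is the genuinely new ingredient of the lemma relative to Lemma~\ref{lem2-1}; without it the statement is not proved. The gap is fillable with the tools you already list, but it must be filled explicitly rather than absorbed into ``the usual chain rule under the integral.''
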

\begin{proof}
	We only verify that $\partial_fF_1(\varepsilon, \Omega, f,g)h_1: \ X^{k+1}\to Y^k$ is continuous, since the continuity of other three Gateaux derivatives can be shown in the same way.
	
	According to Lemma \ref{lem2-1}, it is obvious that
	\begin{equation*}
		\partial_f F_{11}(\varepsilon,\Omega, f)h=\Omega\varepsilon|\varepsilon|\partial_f\mathcal{R}_1(\varepsilon,f)h
	\end{equation*}
	is continuous, where $\mathcal R_{11}(\varepsilon,f)$ is given in \eqref{2-4}.
	
	Then we claim $\partial_f F_{12}(\varepsilon, f,g)h_1=\partial_fF_{121}+\partial_fF_{122}+\partial_fF_{123}+\partial_fF_{124}$ is continuous (one can observe that $\partial_f F_{12}(\varepsilon, f,g)h_1$ is the main source of $\partial_f F_1(\varepsilon, f,g)h_1$ as $\varepsilon\to 0$), where
	\begin{equation*}
		\begin{split}
			\partial_f&F_{121}=\int\!\!\!\!\!\!\!\!\!\; {}-{} \frac{\varepsilon h_1(x)f'(x)(1-\cos(x-y))(1+\varepsilon g(y))}{ \varepsilon^2\left(f(x)-f(y)\right)^2+4(1+\varepsilon f(x))(1+\varepsilon f(y))\sin^2\left(\frac{x-y}{2}\right)} dy\\	
			&  \ \ \ \ \ \ \ \ \ +\int\!\!\!\!\!\!\!\!\!\; {}-{} \frac{(1+\varepsilon f(x))h_1'(x)(1-\cos(x-y))(1+\varepsilon g(y))}{ \varepsilon^2\left(f(x)-f(y)\right)^2+4(1+\varepsilon f(x))(1+\varepsilon f(y))\sin^2\left(\frac{x-y}{2}\right)} dy\\
			&  \ \ \ \ \ \ \ \ \  -\int\!\!\!\!\!\!\!\!\!\; {}-{} \frac{2\varepsilon(1+\varepsilon f(x))f'(x)(1-\cos(x-y))(1+\varepsilon g(y))}{\left( \varepsilon^2\left(f(x)-f(y)\right)^2+4(1+\varepsilon f(x))(1+\varepsilon f(y))\sin^2\left(\frac{x-y}{2}\right)\right)^2}\\
			& \times \left(\varepsilon(f(x)-f(y))(h_1(x)-h_1(y))+2\left((1+\varepsilon f(x))h_1(y)+h_1(x)(1+\varepsilon f(y))\right)\sin^2(\frac{x-y}{2})\right)dy,
		\end{split}
	\end{equation*}	
	\begin{equation*}
		\begin{split}
			\partial_f&F_{122}=\int\!\!\!\!\!\!\!\!\!\; {}-{} \frac{\varepsilon h_1'(x)(f(x)-f(y))\cos(x-y)(1+\varepsilon g(y))}{ \varepsilon^2\left(f(x)-f(y)\right)^2+4(1+\varepsilon f(x))(1+\varepsilon f(y))\sin^2\left(\frac{x-y}{2}\right)} dy\\	
			&  \ \ \ \ \ \ \ \ \ +\int\!\!\!\!\!\!\!\!\!\; {}-{} \frac{\varepsilon f'(x)(h_1(x)-h_1(y))\cos(x-y)(1+\varepsilon g(y))}{ \varepsilon^2\left(f(x)-f(y)\right)^2+4(1+\varepsilon f(x))(1+\varepsilon f(y))\sin^2\left(\frac{x-y}{2}\right)} dy\\
			&  \ \ \ \ \ \ \ \ \  -\int\!\!\!\!\!\!\!\!\!\; {}-{} \frac{2\varepsilon^2 f'(x)(f(x)-f(y))\cos(x-y)(1+\varepsilon g(y))}{\left( \varepsilon^2\left(f(x)-f(y)\right)^2+4(1+\varepsilon f(x))(1+\varepsilon f(y))\sin^2\left(\frac{x-y}{2}\right)\right)^2}\\
			& \times \left(\varepsilon(f(x)-f(y))(h_1(x)-h_1(y))+2\left((1+\varepsilon f(x))h_1(y)+h_1(x)(1+\varepsilon f(y))\right)\sin^2(\frac{x-y}{2})\right)dy,
		\end{split}
	\end{equation*}	
	\begin{equation*}
		\begin{split}
			\partial_f&F_{123}=-\int\!\!\!\!\!\!\!\!\!\; {}-{} \frac{\varepsilon h_1(x)(f(x)-f(y))\sin(x-y)(1+\varepsilon g(y))}{ \varepsilon^2\left(f(x)-f(y)\right)^2+4(1+\varepsilon f(x))(1+\varepsilon f(y))\sin^2\left(\frac{x-y}{2}\right)} dy\\	
			&  \ \ \ \ \ \ \ \ \ -\int\!\!\!\!\!\!\!\!\!\; {}-{} \frac{(1+\varepsilon f(x))(h_1(x)-h_1(y))\sin(x-y)(1+\varepsilon g(y))}{ \varepsilon^2\left(f(x)-f(y)\right)^2+4(1+\varepsilon f(x))(1+\varepsilon f(y))\sin^2\left(\frac{x-y}{2}\right)} dy\\
			&  \ \ \ \ \ \ \ \ \  +\int\!\!\!\!\!\!\!\!\!\; {}-{} \frac{2\varepsilon(1+\varepsilon f(x))(f(x)-f(y))\sin(x-y)(1+\varepsilon g(y))}{\left( \varepsilon^2\left(f(x)-f(y)\right)^2+4(1+\varepsilon f(x))(1+\varepsilon f(y))\sin^2\left(\frac{x-y}{2}\right)\right)^2}\\
			& \times \left(\varepsilon(f(x)-f(y))(h_1(x)-h_1(y))+2\left((1+\varepsilon f(x))h_1(y)+h_1(x)(1+\varepsilon f(y))\right)\sin^2(\frac{x-y}{2})\right)dy,
		\end{split}
	\end{equation*}
	\begin{equation*}
		\begin{split}
			\partial_f&F_{124}=\int\!\!\!\!\!\!\!\!\!\; {}-{} \frac{2(1+\varepsilon f(x))h_1(x)\sin(x-y)(1+\varepsilon g(y))}{ \varepsilon^2\left(f(x)-f(y)\right)^2+4(1+\varepsilon f(x))(1+\varepsilon f(y))\sin^2\left(\frac{x-y}{2}\right)} dy\\	
			&  \ \ \ \ \ \ \ \ \  -\int\!\!\!\!\!\!\!\!\!\; {}-{} \frac{2(1+\varepsilon f(x))^2\sin(x-y)(1+\varepsilon g(y))}{\left( \varepsilon^2\left(f(x)-f(y)\right)^2+4(1+\varepsilon f(x))(1+\varepsilon f(y))\sin^2\left(\frac{x-y}{2}\right)\right)^2}\\
			& \times \left(\varepsilon(f(x)-f(y))(h_1(x)-h_1(y))+2\left((1+\varepsilon f(x))h_1(y)+h_1(x)(1+\varepsilon f(y))\right)\sin^2(\frac{x-y}{2})\right)dy.
		\end{split}
	\end{equation*}
	First we are to show that these Gateaux derivatives are well-defined, that is
	$$\lim\limits_{t\to0}\left\|\frac{F_{12i}(\varepsilon, f+th_1,g)-F_{12i}(\varepsilon, f,g )}{t}-\partial_fF_{12i}\right\|_{Y^k}= 0, \ \ \text{for} \ \  i=1,2,3,4.$$
	To simplify our proof, we will deal with the most singular case $i=2$. Using the notations given in Lemma \ref{lem2-1}, we deduce that
	\begin{equation*}
		\begin{split}
			&\frac{F_{122}(\varepsilon, f+th_1,g)-F_{122}(\varepsilon, f,g )}{t}-\partial_fF_{122}\\
			&=\varepsilon h_1'(x)\int\!\!\!\!\!\!\!\!\!\; {}-{}(f(x)-f(y))\cos(x-y)(1+\varepsilon g(y))\bigg(\frac{1}{D(f+th_1)}-\frac{1}{D(f)}\bigg)dy\\
			& \ \ \ +\frac{\varepsilon f'(x)}{t}\int\!\!\!\!\!\!\!\!\!\; {}-{}(f(x)-f(y))\cos(x-y)(1+\varepsilon g(y))\\
			& \ \ \ \ \ \ \times\bigg(\frac{1}{D(f+th_1)}-\frac{1}{D(f)}+t\frac{2\varepsilon^2\Delta f\Delta h_1+4\big((1+\varepsilon\tilde f)\varepsilon h_1+\varepsilon \tilde h_1(1+\varepsilon f)\big)\sin^2(\frac{x-y}{2})}{D(f)^2}\bigg)dy\\
			& \ \ \ +\varepsilon f'(x)\int\!\!\!\!\!\!\!\!\!\; {}-{}(h_1(x)-h_1(y))\cos(x-y)(1+\varepsilon g(y))\bigg(\frac{1}{D(f+th_1)}-\frac{1}{D(f)}\bigg)dy\\
			& \ \ \ +t\varepsilon h_1'(x)\int\!\!\!\!\!\!\!\!\!\; {}-{}\frac{(h_1(x)-h_1(y))\cos(x-y)(1+\varepsilon g(y))}{D(f+th_1)}dy\\
			&=\partial_f^2F_{1221}+\partial_f^2F_{1222}+\partial_f^2F_{1223}+\partial_f^2F_{1224}.
		\end{split}
	\end{equation*}
    From \eqref{2-12}, for $\partial_f^2F_{1221}$ and $\partial_f^2F_{1223}$ we have
    \begin{equation*}
    	\|\partial^k\partial_f^2F_{1221}(x\pm a\mathbf{i})\|_{L^2}\le tC\|h_1\|_{x^{k+1}}\|f\|_{X^{k+1}}(1+\|g\|_{X^k})(1+\|f\|_{X^{k+1}})\|h_1\|_{X^k},
    \end{equation*}
    and
    \begin{equation*}
    	\|\partial^k\partial_f^2F_{1223}(x\pm a\mathbf{i})\|_{L^2}\le tC\|f\|_{x^{k+1}}\|f\|_{X^{k+1}}(1+\|g\|_{X^k})(1+\|f\|_{X^{k+1}})\|h_1\|_{X^k}.
    \end{equation*}
    For $\partial_f^2F_{1224}$, the computation will be a little tedious. However, using mean value theorem, we are able to verify that
    \begin{equation*}
    	\frac{1}{D(f+th_1)}-\frac{1}{D(f)}+t\frac{2\varepsilon^2\Delta f\Delta h_1+4\big(\varepsilon\tilde R h_1+\varepsilon \tilde h_1R\big)\sin^2(\frac{x-y}{2})}{D(f)^2}\le \frac{t^2}{4\sin^2\left(\frac{x-y}{2}\right)} \xi(\varepsilon,f,h_1),
    \end{equation*}
    where $\|\xi(\varepsilon,f,h_1)(x\pm a\mathbf{i})\|_{L^\infty}\le C(\varepsilon,\|f\|_{x^{k+1}},\|h_1\|_{x^{k+1}})<\infty$. As a result, it holds
    \begin{equation*}
    	\|\partial^k\partial_f^2F_{1224}(x\pm a\mathbf{i})\|_{L^2}\le tC\|f\|_{x^{k+1}}\|f\|_{X^{k+1}}(1+\|g\|_{X^k})\|\xi(\varepsilon,f,h_1)(x\pm a\mathbf{i})\|_{L^\infty}.
    \end{equation*}
    For the fourth term $\partial_f^2F_{1224}$, it is obvious that
    \begin{equation*}
    	\|\partial^k\partial_f^2F_{1224}(x\pm a\mathbf{i})\|_{L^2}\le tC\|f\|_{x^{k+1}}\|h_1\|_{X^{k+1}}(1+\|g\|_{X^k}),
    \end{equation*}
    and we have achieved our first target. In the next step, the continuity of $\partial_f F_{12}(\varepsilon, f,g)h_1$ should be shown. One can proceed as the proof Lemma \ref{lem2-1} and hence we omit the proof.

    To finish our discussion on $F_1$, we have to verify the continuity of $\partial_f F_{13}(\varepsilon, f,g)h_1$ and $\partial_f F_{14}(\varepsilon, f,g)h_1$. For this purpose, a similar method can be applied as above. Moreover, due to the positive lower bound of denominator, the proof of this part is much simpler.

    In the last, noting that $F_1$ and $F_2$ are almost linear dependent on $g$, it is easy to compute their Gateaux derivatives with respect to $g$, and we leave them to our readers. On the other hand, since $\partial_f F_{22}(\varepsilon, f,g)h_1=\partial_fF_{221}+\partial_fF_{222}+\partial_fF_{223}+\partial_fF_{224}$ is a little bit complicated and is the main source of $\partial_f F_2(\varepsilon, f,g)h_1$, we also write the four terms down here for readers' convenience.
    \begin{equation*}
    	\begin{split}
    		\partial_f&F_{221}=-\frac{2\varepsilon(1+\varepsilon g(x)) f'(x)h_1'(x)}{\big(\varepsilon^2f'(x)^2+(1+\varepsilon f(x))^2\big)^2}\int\!\!\!\!\!\!\!\!\!\; {}-{} \frac{(1+\varepsilon f(x))(1+\varepsilon f(y))(\cos(x-y)-1)(1+\varepsilon g(y))}{ \varepsilon^2\left(f(x)-f(y)\right)^2+4(1+\varepsilon f(x))(1+\varepsilon f(y))\sin^2\left(\frac{x-y}{2}\right)} dy\\
    		&  \ \ \  -\frac{2(1+\varepsilon g(x))(1+\varepsilon f(x))h_1(x)}{\big(\varepsilon^2f'(x)^2+(1+\varepsilon f(x))^2\big)^2}\int\!\!\!\!\!\!\!\!\!\; {}-{} \frac{(1+\varepsilon f(x))(1+\varepsilon f(y))(\cos(x-y)-1)(1+\varepsilon g(y))}{ \varepsilon^2\left(f(x)-f(y)\right)^2+4(1+\varepsilon f(x))(1+\varepsilon f(y))\sin^2\left(\frac{x-y}{2}\right)} dy\\
    		&  \ \ \ +\frac{(1+\varepsilon g(x))}{\varepsilon^2f'(x)^2+(1+\varepsilon f(x))^2}\int\!\!\!\!\!\!\!\!\!\; {}-{} \frac{(1+\varepsilon f(x))h_1(y)(\cos(x-y)-1)(1+\varepsilon g(y))}{ \varepsilon^2\left(f(x)-f(y)\right)^2+4(1+\varepsilon f(x))(1+\varepsilon f(y))\sin^2\left(\frac{x-y}{2}\right)} dy\\	
    		&  \ \ \ +\frac{(1+\varepsilon g(x))}{\varepsilon^2f'(x)^2+(1+\varepsilon f(x))^2}\int\!\!\!\!\!\!\!\!\!\; {}-{} \frac{ h_1(x)(1+\varepsilon f(y))(\cos(x-y)-1)(1+\varepsilon g(y))}{ \varepsilon^2\left(f(x)-f(y)\right)^2+4(1+\varepsilon f(x))(1+\varepsilon f(y))\sin^2\left(\frac{x-y}{2}\right)} dy\\
    		&  \ \ \  -\frac{(1+\varepsilon g(x))}{\varepsilon^2f'(x)^2+(1+\varepsilon f(x))^2}\int\!\!\!\!\!\!\!\!\!\; {}-{} \frac{2(1+\varepsilon f(x))(1+\varepsilon f(y))(\cos(x-y)-1)(1+\varepsilon g(y))}{\left( \varepsilon^2\left(f(x)-f(y)\right)^2+4(1+\varepsilon f(x))(1+\varepsilon f(y))\sin^2\left(\frac{x-y}{2}\right)\right)^2}\\
    		& \times \left(\varepsilon(f(x)-f(y))(h_1(x)-h_1(y))+2\left((1+\varepsilon f(x))h_1(y)+h_1(x)(1+\varepsilon f(y))\right)\sin^2(\frac{x-y}{2})\right)dy,
    	\end{split}
    \end{equation*}	
    \begin{equation*}
    	\begin{split}
    		\partial_f&F_{222}=\frac{2\varepsilon(1+\varepsilon g(x)) f'(x)h_1'(x)}{\big(\varepsilon^2f'(x)^2+(1+\varepsilon f(x))^2\big)^2}\int\!\!\!\!\!\!\!\!\!\; {}-{} \frac{\varepsilon^2 f'(x)(f(x)-f(y))\sin(x-y)(1+\varepsilon g(y))}{\left( \varepsilon^2\left(f(x)-f(y)\right)^2+4(1+\varepsilon f(x))(1+\varepsilon f(y))\sin^2\left(\frac{x-y}{2}\right)\right)^2}dy\\
    		& \ \ \ +\frac{2(1+\varepsilon g(x))(1+\varepsilon f(x))h_1(x)}{\big(\varepsilon^2f'(x)^2+(1+\varepsilon f(x))^2\big)^2}\int\!\!\!\!\!\!\!\!\!\; {}-{} \frac{\varepsilon^2 f'(x)(f(x)-f(y))\sin(x-y)(1+\varepsilon g(y))}{\left( \varepsilon^2\left(f(x)-f(y)\right)^2+4(1+\varepsilon f(x))(1+\varepsilon f(y))\sin^2\left(\frac{x-y}{2}\right)\right)^2}dy\\
    		& \ \ \ -\frac{(1+\varepsilon g(x))}{\varepsilon^2f'(x)^2+(1+\varepsilon f(x))^2}\int\!\!\!\!\!\!\!\!\!\; {}-{} \frac{\varepsilon^2 h_1'(x)(f(x)-f(y))\sin(x-y)(1+\varepsilon g(y))}{ \varepsilon^2\left(f(x)-f(y)\right)^2+4(1+\varepsilon f(x))(1+\varepsilon f(y))\sin^2\left(\frac{x-y}{2}\right)} dy\\	
    		&  \ \ \  -\frac{(1+\varepsilon g(x))}{\varepsilon^2f'(x)^2+(1+\varepsilon f(x))^2}\int\!\!\!\!\!\!\!\!\!\; {}-{} \frac{\varepsilon^2 f'(x)(h_1(x)-h_1(y))\sin(x-y)(1+\varepsilon g(y))}{ \varepsilon^2\left(f(x)-f(y)\right)^2+4(1+\varepsilon f(x))(1+\varepsilon f(y))\sin^2\left(\frac{x-y}{2}\right)} dy\\
    		&  \ \ \ +\frac{(1+\varepsilon g(x))}{\varepsilon^2f'(x)^2+(1+\varepsilon f(x))^2}\int\!\!\!\!\!\!\!\!\!\; {}-{} \frac{2\varepsilon^2 f'(x)(f(x)-f(y))\sin(x-y)(1+\varepsilon g(y))}{\left( \varepsilon^2\left(f(x)-f(y)\right)^2+4(1+\varepsilon f(x))(1+\varepsilon f(y))\sin^2\left(\frac{x-y}{2}\right)\right)^2}\\
    		& \times \left(\varepsilon(f(x)-f(y))(h_1(x)-h_1(y))+2\left((1+\varepsilon f(x))h_1(y)+h_1(x)(1+\varepsilon f(y))\right)\sin^2(\frac{x-y}{2})\right)dy,
    	\end{split}
    \end{equation*}	
    \begin{equation*}
    	\begin{split}
    		\partial_f&F_{223}=\frac{2\varepsilon(1+\varepsilon g(x)) f'(x)h_1'(x)}{\big(\varepsilon^2f'(x)^2+(1+\varepsilon f(x))^2\big)^2}\int\!\!\!\!\!\!\!\!\!\; {}-{} \frac{\varepsilon(1+\varepsilon f(x))(f(x)-f(y))(1+\varepsilon g(y))}{\left( \varepsilon^2\left(f(x)-f(y)\right)^2+4(1+\varepsilon f(x))(1+\varepsilon f(y))\sin^2\left(\frac{x-y}{2}\right)\right)^2}dy\\
    		& \ \ \ +\frac{2(1+\varepsilon g(x))(1+\varepsilon f(x))h_1(x)}{\big(\varepsilon^2f'(x)^2+(1+\varepsilon f(x))^2\big)^2}\int\!\!\!\!\!\!\!\!\!\; {}-{} \frac{\varepsilon(1+\varepsilon f(x))(f(x)-f(y))(1+\varepsilon g(y))}{\left( \varepsilon^2\left(f(x)-f(y)\right)^2+4(1+\varepsilon f(x))(1+\varepsilon f(y))\sin^2\left(\frac{x-y}{2}\right)\right)^2}dy\\
    		& \ \ \  -\frac{(1+\varepsilon g(x))}{\varepsilon^2f'(x)^2+(1+\varepsilon f(x))^2}\int\!\!\!\!\!\!\!\!\!\; {}-{} \frac{\varepsilon h_1(x)(f(x)-f(y))(1+\varepsilon g(y))}{ \varepsilon^2\left(f(x)-f(y)\right)^2+4(1+\varepsilon f(x))(1+\varepsilon f(y))\sin^2\left(\frac{x-y}{2}\right)} dy\\	
    		&  \ \ \ -\frac{(1+\varepsilon g(x))}{\varepsilon^2f'(x)^2+(1+\varepsilon f(x))^2}\int\!\!\!\!\!\!\!\!\!\; {}-{} \frac{(1+\varepsilon f(x))(h_1(x)-h_1(y))(1+\varepsilon g(y))}{ \varepsilon^2\left(f(x)-f(y)\right)^2+4(1+\varepsilon f(x))(1+\varepsilon f(y))\sin^2\left(\frac{x-y}{2}\right)} dy\\
    		&  \ \ \  +\frac{(1+\varepsilon g(x))}{\varepsilon^2f'(x)^2+(1+\varepsilon f(x))^2}\int\!\!\!\!\!\!\!\!\!\; {}-{} \frac{2\varepsilon(1+\varepsilon f(x))(f(x)-f(y))(1+\varepsilon g(y))}{\left( \varepsilon^2\left(f(x)-f(y)\right)^2+4(1+\varepsilon f(x))(1+\varepsilon f(y))\sin^2\left(\frac{x-y}{2}\right)\right)^2}\\
    		& \times \left(\varepsilon(f(x)-f(y))(h_1(x)-h_1(y))+2\left((1+\varepsilon f(x))h_1(y)+h_1(x)(1+\varepsilon f(y))\right)\sin^2(\frac{x-y}{2})\right)dy,
    	\end{split}
    \end{equation*}
    \begin{equation*}
    	\begin{split}
    		\partial_f&F_{224}=-\frac{2\varepsilon(1+\varepsilon g(x)) f'(x)h_1'(x)}{\big(\varepsilon^2f'(x)^2+(1+\varepsilon f(x))^2\big)^2}\int\!\!\!\!\!\!\!\!\!\; {}-{} \frac{\varepsilon f'(x)(1+\varepsilon f(x))\sin(x-y)(1+\varepsilon g(y))}{\left( \varepsilon^2\left(f(x)-f(y)\right)^2+4(1+\varepsilon f(x))(1+\varepsilon f(y))\sin^2\left(\frac{x-y}{2}\right)\right)^2}dy\\
    		& \ \ \ -\frac{2(1+\varepsilon g(x))(1+\varepsilon f(x))h_1(x)}{\big(\varepsilon^2f'(x)^2+(1+\varepsilon f(x))^2\big)^2}\int\!\!\!\!\!\!\!\!\!\; {}-{} \frac{\varepsilon f'(x)(1+\varepsilon f(x))\sin(x-y)(1+\varepsilon g(y))}{\left( \varepsilon^2\left(f(x)-f(y)\right)^2+4(1+\varepsilon f(x))(1+\varepsilon f(y))\sin^2\left(\frac{x-y}{2}\right)\right)^2}dy\\
    		& \ \ \ +\frac{(1+\varepsilon g(x))}{\varepsilon^2f'(x)^2+(1+\varepsilon f(x))^2}\int\!\!\!\!\!\!\!\!\!\; {}-{} \frac{ h_1'(x)(1+\varepsilon f(x))\sin(x-y)(1+\varepsilon g(y))}{ \varepsilon^2\left(f(x)-f(y)\right)^2+4(1+\varepsilon f(x))(1+\varepsilon f(y))\sin^2\left(\frac{x-y}{2}\right)} dy\\	
    		& \ \ \ +\frac{(1+\varepsilon g(x))}{\varepsilon^2f'(x)^2+(1+\varepsilon f(x))^2}\int\!\!\!\!\!\!\!\!\!\; {}-{} \frac{\varepsilon f'(x)h_1(x)\sin(x-y)(1+\varepsilon g(y))}{ \varepsilon^2\left(f(x)-f(y)\right)^2+4(1+\varepsilon f(x))(1+\varepsilon f(y))\sin^2\left(\frac{x-y}{2}\right)} dy\\	
    	    & \ \ \ -\frac{(1+\varepsilon g(x))}{\varepsilon^2f'(x)^2+(1+\varepsilon f(x))^2}\int\!\!\!\!\!\!\!\!\!\; {}-{} \frac{2\varepsilon f'(x)(1+\varepsilon f(x))\sin(x-y)(1+\varepsilon g(y))}{\left( \varepsilon^2\left(f(x)-f(y)\right)^2+4(1+\varepsilon f(x))(1+\varepsilon f(y))\sin^2\left(\frac{x-y}{2}\right)\right)^2}\\
    		& \times \left(\varepsilon(f(x)-f(y))(h_1(x)-h_1(y))+2\left((1+\varepsilon f(x))h_1(y)+h_1(x)(1+\varepsilon f(y))\right)\sin^2(\frac{x-y}{2})\right)dy.
    	\end{split}
    \end{equation*}

    Concluding all the facts above, we have completed the proof.
\end{proof}

According to Lemma \ref{lem2-1} and Lemma \ref{lem2-2}, when $\varepsilon=0$ and $f,g\equiv 0$, the Gateaux derivatives are
\begin{equation*}
	\partial_fF_1(\varepsilon, \Omega, f,g)h_1=\frac{1}{2}h_1'(x),
\end{equation*}
\begin{equation*}
	\partial_gF_1(\varepsilon, \Omega, f,g)h_2=\int\!\!\!\!\!\!\!\!\!\; {}-{}\frac{h_2(y)\sin(x-y)}{4\sin^2\left(\frac{x-y}{2}\right)}dy,
\end{equation*}
\begin{equation*}
	\partial_fF_2(\varepsilon, \Omega, f,g)h_1=h_1(x)-\int\!\!\!\!\!\!\!\!\!\; {}-{}\frac{h_1(x)-h_1(y)}{4\sin^2\left(\frac{x-y}{2}\right)}dy,
\end{equation*}
\begin{equation*}
	\partial_gF_1(\varepsilon, \Omega, f,g)h_2=-\frac{1}{2}h_2(x).
\end{equation*}
Take $(h_1,h_2)\in X^{k+1}\times X^k$, where
\begin{equation}\label{2-20}
	h_1(x)=\sum_{j=1}^\infty a_j \cos(jx) \ \ \ \text{and} \ \ \ h_2(x)=\sum_{j=1}^\infty b_j \cos(jx).
\end{equation}
If we denote $\mathbf F=(F_1,F_2)(\varepsilon,\Omega,f,g)$, the linearization of $\mathbf F$ at $(0,\Omega,0,0)$ has the following Fourier series form
\begin{equation}\label{2-21}
	D \mathbf F(0,\Omega,0,0) (h_1, h_2)=\begin{pmatrix}
		\partial_fF_1 (0, \Omega, 0,0) h_1 + \partial_gF_1 (0, \Omega, 0,0) h_2\\
	    \partial_fF_2(0, \Omega, 0,0) h_1 + \partial_gF_2 (0, \Omega, 0,0) h_2
	\end{pmatrix}
	=\sum_{j=1}^\infty\begin{pmatrix}  \hat a_j \sin(jx) \\  \hat b_j \cos(jx)\end{pmatrix},
\end{equation}
where
\begin{equation*}
	 \begin{pmatrix} \hat a_j  \\ \hat b_j \end{pmatrix}=M_j  \begin{pmatrix} a_j  \\  b_j \end{pmatrix}
\end{equation*}
with $M_j$ a $2\times 2$ matrix and $a_j,b_j$ given in \eqref{2-20}. In the following lemma we will calculate the matrix $M_j$, and claim that the linearization of $\mathbf F(\varepsilon,\Omega,f)$ at $(0,\Omega,0,0)$ is an isomorphism from $\mathcal A^k$ to $\mathcal B^k$. Notice that $M_1$ is not invertible. This is the reason why we give $\mathcal A^k$ and $\mathcal B^k$ a special restriction on the first Fourier coefficient.
\begin{lemma}\label{lem2-3}
	One has
	$$M_j=\begin{pmatrix} -j/2 & 1/2 \\ (2-j)/2 & -1/2 \end{pmatrix}.$$
	Moreover, $D\mathbf F(0,\Omega,0,0) (h_1, h_2)$ is an isomorphism from $\mathcal A^k$ to $\mathcal B^k$.
\end{lemma}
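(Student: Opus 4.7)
The plan is to compute $M_j$ explicitly by evaluating the four linearized operators on the Fourier basis $\cos(jx)$, and then to check that the constraints built into $\mathcal A^k$ and $\mathcal B^k$ exactly compensate for the single degenerate mode at $j=1$.

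First, I will apply each Gateaux derivative to $h_1(x)=\cos(jx)$ and $h_2(x)=\cos(jx)$. The two non-integral pieces $\partial_f F_1 h_1 = \tfrac12 h_1'$ and $\partial_g F_2 h_2 = -\tfrac12 h_2$ immediately produce $-\tfrac{j}{2}\sin(jx)$ and $-\tfrac12\cos(jx)$. For $\partial_g F_1 h_2$, I will use the identity $\sin(x-y)/(4\sin^2((x-y)/2)) = \tfrac12\cot((x-y)/2)$ together with the standard circular Hilbert transform rule $\int\!\!\!\!\!\!\!\!\!\; {}-{}\cot((x-y)/2)\cos(jy)\,dy = \sin(jx)$, which yields $\tfrac12\sin(jx)$. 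For $\partial_f F_2 h_1$, I will derive the singular-integral identity $\int\!\!\!\!\!\!\!\!\!\; {}-{} (\cos(jx)-\cos(jy))/(4\sin^2((x-y)/2))\,dy = \tfrac{j}{2}\cos(jx)$ by the substitution $y=x-z$, the product-to-sum formula $\cos(jx)-\cos(j(x-z)) = -2\sin(jx-jz/2)\sin(jz/2)$, and the Dirichlet/Fej\'er identity $\int\!\!\!\!\!\!\!\!\!\; {}-{} \sin^2(jz/2)/\sin^2(z/2)\,dz = j$, after noting that the odd-in-$z$ part integrates to zero. Collecting these four values gives $\hat a_j = -\tfrac{j}{2} a_j+\tfrac12 b_j$ and $\hat b_j=\tfrac{2-j}{2}a_j-\tfrac12 b_j$, which is exactly the stated matrix $M_j$.

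Next, a direct determinant computation gives $\det M_j = (j-1)/2$, so $M_j$ is invertible for every $j\ge 2$ and the only obstruction is at $j=1$. Writing out $M_1$ one sees $\ker M_1=\{(a,b):a=b\}$ and $\mathrm{Range}\,M_1=\{(\hat a,\hat b):\hat a+\hat b=0\}$. The constraint $\int\!\!\!\!\!\!\!\!\!\; {}-{} h_1\cos x\,dx = -\int\!\!\!\!\!\!\!\!\!\; {}-{} h_2\cos x\,dx$ defining $\mathcal A^k$ translates, in the cosine basis, to $a_1=-b_1$, which is precisely a complement to $\ker M_1$; moreover $M_1(a_1,-a_1)^T=(-a_1,a_1)^T$, satisfying $\hat a_1=-\hat b_1$, which is exactly the constraint defining $\mathcal B^k$. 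Hence the restricted map on the first mode is an isomorphism of one-dimensional spaces, and combining with invertibility of $M_j$ for $j\ge 2$ yields a bijection $\mathcal A^k\to\mathcal B^k$ in Fourier coordinates.

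Finally, to promote these mode-by-mode bijections to a bounded isomorphism in the analytic strip norms, I will invert $M_j$ for $j\ge 2$ and observe that the entries of $M_j^{-1}$ satisfy $|a_j|\lesssim (|\hat a_j|+|\hat b_j|)/(j-1)$ and $|b_j|\lesssim |\hat a_j|+|\hat b_j|$. Combined with the Fourier description of the strip norms $\|u\|_{X^k}^2\sim\sum |a_j|^2 j^{2k}\cosh(2ja)$ (and analogously for $Y^k$), the one-derivative loss in $a_j$ is precisely absorbed by the regularity gap between $X^{k+1}$ and $X^k$, yielding $\|h_1\|_{X^{k+1}}+\|h_2\|_{X^k}\lesssim \|F_1\|_{Y^k}+\|F_2\|_{X^k}$, while boundedness of $D\mathbf F(0,\Omega,0,0)$ itself is already contained in Lemma \ref{lem2-2}. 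The main obstacle I anticipate is the clean derivation of the singular-integral identity for $\partial_f F_2 \cos(jx)$, and conceptually the observation that the seemingly ad hoc first-Fourier-coefficient constraint defining $\mathcal A^k$ and $\mathcal B^k$ is exactly the linear-algebraic complement of $\ker M_1$ mapping onto the range of $M_1$; once these ingredients are in hand the remainder is bookkeeping on Fourier multipliers.
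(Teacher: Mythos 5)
Your proposal is correct and follows essentially the same route as the paper: compute the four Fourier multipliers (the paper cites the Hilbert transform and $(-\Delta)^{1/2}$ where you derive the second identity via the Fej\'er kernel, but this is the same identity), observe $\det M_j=(j-1)/2$ so only the $j=1$ mode degenerates, check that the first-Fourier-coefficient constraints in $\mathcal A^k$ and $\mathcal B^k$ exactly complement $\ker M_1$ and match $\mathrm{Range}\,M_1$, and absorb the $O(j^{-1})$ decay of the first row of $M_j^{-1}$ into the $X^{k+1}$ versus $X^k$ regularity gap. Your treatment of the $j=1$ mode and of the Fej\'er identity is somewhat more explicit than the paper's, but no step differs in substance.
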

\begin{proof}
	We first show
	\begin{equation}\label{2-22}
		\int\!\!\!\!\!\!\!\!\!\; {}-{}\frac{\cos(jx)\sin(x-y)}{4\sin^2\left(\frac{x-y}{2}\right)}dy=\frac{1}{2}\sin(jx)
	\end{equation}
    holds for all $j\ge1$ and $j\in \mathbb N^*$. It can be deduced from the identity
    \begin{equation*}
    	\int\!\!\!\!\!\!\!\!\!\; {}-{}\frac{\cos(jy)\sin(x-y)}{4\sin^2\left(\frac{x-y}{2}\right)}dy=\frac{1}{2}\int\!\!\!\!\!\!\!\!\!\; {}-{}\cos(jy)\cot\left(\frac{x-y}{2}\right)dy=\frac{1}{2} \mathcal H(\cos(jx))(x),
    \end{equation*}
    where $\mathcal H(\cdot)$ is the Hilbert transform on torus. This fact implies that
    \begin{equation*}
    	\partial_fF_1(\varepsilon, \Omega, f,g)h_1=-\sum_{j=1}^\infty\frac{j}{2} a_j \sin(jx),
    \end{equation*}
    and
	\begin{equation*}
		\partial_gF_1(\varepsilon, \Omega, f,g)h_2=\frac{1}{2}\sum_{j=1}^\infty b_j\sin(jx).
	\end{equation*}	
    On the other hand,  for all $j\ge1$ and $j\in \mathbb N^*$ it holds
    \begin{equation*}
    	\int\!\!\!\!\!\!\!\!\!\; {}-{}\frac{\cos(jx)-\cos(jy)}{4\sin^2\left(\frac{x-y}{2}\right)}dy=\frac{1}{2}(-\Delta)^{\frac{1}{2}}\cos(jx)=\frac{j}{2}\cos(jx).
    \end{equation*}
    Then we obtain
    \begin{equation*}
    	\partial_fF_2(\varepsilon, \Omega, f,g)h_1=\sum_{j=1}^\infty\frac{2-j}{2} a_j \sin(jx),
    \end{equation*}
    and
    \begin{equation*}
    	\partial_gF_2(\varepsilon, \Omega, f,g)h_2=-\frac{1}{2}\sum_{j=1}^\infty b_j\sin(jx).
    \end{equation*}	
    Thus we have verified the first part of this lemma.

    Now we are going to prove $D\mathbf F(0,\Omega,0,0) (h_1, h_2)$ is an isomorphism from $\mathcal A^k$ to $\mathcal B^k$. Recall the definition of $\mathcal A^k$ and $\mathcal B^k$ given in the beginning of this section. From Lemma \eqref{2-2} and $M_1=\begin{pmatrix} -1/2 & 1/2 \\ 1/2 & -1/2 \end{pmatrix}$, it is obvious that $D\mathbf{F}$ maps $\mathcal A^k$ into $\mathcal B^k$. Hence only the invertibility need to be considered.

    For $j\ge 2$, $\det(M_j)=\frac{j-1}{2}$ which implies $M_j$ is invertible at this time. We can compute the inverse of $M_j$ as
    \begin{equation}\label{2-23}
       M_j^{-1}=\begin{pmatrix} -1/(j-1) & -1/(j-1) \\ (j-2)/(j-1) & -j/(j-1) \end{pmatrix}, \ \ \ \forall \, j\ge 2.
    \end{equation}
    Thus for any $(u,v)\in \mathcal B^k$ with the formulation
    \begin{equation*}
    	u=\sum_{j=1}^\infty p_j\sin(jx) \ \ \ \text{and} \ \ \ g=-p_1\cos(x)+\sum_{j=1}^\infty q_j\cos(jx),
    \end{equation*}
    we can write $D\mathbf F^{-1}(0,\Omega,0,0)(u,v)$ as
    \begin{equation*}
    		D \mathbf F^{-1}(0,\Omega,0,0) (u,v)=\begin{pmatrix}  -p_1 \cos(jx) \\  p_1 \cos(jx)\end{pmatrix}+\sum_{j=2}^\infty\begin{pmatrix}  \tilde p_j \cos(jx) \\  \tilde q_j \cos(jx)\end{pmatrix},
    \end{equation*}
    where
    \begin{equation*}
    	\begin{pmatrix} \tilde p_j  \\ \tilde q_j \end{pmatrix}=M_j^{-1}  \begin{pmatrix} p_j  \\  q_j \end{pmatrix}, \ \ \ \forall \, j\ge 2.
    \end{equation*}
    From \eqref{2-23}, we see $\tilde p_j\sim -j^{-1}(p_j+q_j)$ and $\tilde q_j\sim p_j-q_j$ as $j\to +\infty$. Hence $D \mathbf F^{-1}(0,\Omega,0,0) (u,v)$ does belong to $\mathcal A^k$ and the proof is finished.
\end{proof}

One can easily verify that for
\begin{equation}\label{2-24}
	\Omega^*:=\frac{m-1}{2d^2},
\end{equation}
the four-tuple $(0,\Omega^*,0,0)$ is a trivial solution to $\mathbf F(\varepsilon,\Omega,f,g)=(F_1,F_2)(\varepsilon,\Omega,f,g)=0$, which corresponds to co-rotating $m$-fold point vortex solution. To apply the implicit function theorem and obtain concentrative vortex sheet solutions, we need to choose suitable $\Omega=(\varepsilon,f,g)$ such that $\mathbf F(\varepsilon,\Omega,f,g)$ maps $V^r\subset\mathcal A^k$ into $\mathcal B^k$. This is achieved in the following lemma.
\begin{lemma}\label{lem2-4}
	Let $\Omega^*$ be given by \eqref{2-24}.Then there exists some continuous function $\Omega_\varepsilon(\varepsilon, f,g):(-1/2,1/2)\times V^r\to \mathbb R$, such that if
	\begin{equation*}
		\Omega(\varepsilon,f,g):=\Omega^*+\varepsilon\cdot\Omega_\varepsilon(\varepsilon,f,g),
	\end{equation*}
then $\mathbf{F^*}(\varepsilon,f,g):=\mathbf F(\varepsilon,\Omega(\varepsilon,f,g),f,g)$ maps $(-1/2, 1/2)\times V^r$ into $\mathcal B^k$. Moreover, for any $(\varepsilon,f,g)\in (-1/2,1/2)\times V^r$, $\partial_f\Omega_\varepsilon(\varepsilon,f,g)(\cdot):X^{k+1}\to \mathbb R$ and  $\partial_g\Omega_\varepsilon(\varepsilon,f,g)(\cdot):X^k\to \mathbb R$ are both continuous.
\end{lemma}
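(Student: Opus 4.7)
The plan is to view the $\mathcal{B}^k$-compatibility as a single scalar constraint determined by the scalar parameter $\Omega$. Since Lemma \ref{lem2-1} already yields $F_1(\varepsilon,\Omega,f,g)\in Y^k$ and $F_2(\varepsilon,\Omega,f,g)\in X^k$, the only obstruction to $(F_1,F_2)\in\mathcal{B}^k$ is the scalar equation
\begin{equation*}
  \Phi(\varepsilon,\Omega,f,g):=\int\!\!\!\!\!\!\!\!\!\;{}-{} F_1(x)\sin(x)\,dx+\int\!\!\!\!\!\!\!\!\!\;{}-{} F_2(x)\cos(x)\,dx=0,
\end{equation*}
and I would solve it for $\Omega$ via the implicit function theorem at the trivial point $(\varepsilon,\Omega)=(0,\Omega^*)$, then extract the factor $\varepsilon$.

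The first substep is to verify $\Phi(0,\Omega^*,f,g)\equiv 0$ on $V^r$. Using \eqref{2-10} and \eqref{2-13}--\eqref{2-15} together with the identity $\Omega^*d=(m-1)/(2d)$, the leading pieces of $F_{11}$ and $F_{14}$ cancel and $F_{13}|_{\varepsilon=0}=0$, so
\begin{equation*}
  F_1(0,\Omega^*,f,g)=\tfrac12 f'(x)+\int\!\!\!\!\!\!\!\!\!\;{}-{}\frac{g(y)\sin(x-y)}{4\sin^2((x-y)/2)}\,dy;
\end{equation*}
the parallel reduction via \eqref{2-16}--\eqref{2-19} gives
\begin{equation*}
  F_2(0,\Omega^*,f,g)=f(x)-\int\!\!\!\!\!\!\!\!\!\;{}-{}\frac{f(x)-f(y)}{4\sin^2((x-y)/2)}\,dy-\tfrac12 g(x).
\end{equation*}
Expanding $f=\sum a_j\cos(jx)$ and $g=\sum b_j\cos(jx)$ and invoking the Fourier identities worked out in the proof of Lemma \ref{lem2-3}, the two relevant integrals evaluate to $(b_1-a_1)/4$ and $(a_1-b_1)/4$, whose sum is zero. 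For the non-degeneracy in $\Omega$, only $F_{11}$ and $F_{21}$ depend on $\Omega$, and at $\varepsilon=0$ they collapse to $\Omega d\sin x$ and $\Omega d\cos x$ independently of $(f,g)$, so
\begin{equation*}
  \partial_\Omega\Phi\big|_{(0,\Omega^*,f,g)}=d\int\!\!\!\!\!\!\!\!\!\;{}-{}\sin^2 x\,dx+d\int\!\!\!\!\!\!\!\!\!\;{}-{}\cos^2 x\,dx=d\neq 0.
\end{equation*}

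Using the continuity and continuous Gateaux differentiability from Lemmas \ref{lem2-1} and \ref{lem2-2}, the scalar implicit function theorem produces a continuous $\Omega(\varepsilon,f,g)$ solving $\Phi=0$ near $(0,\Omega^*,0,0)$ with continuous Gateaux derivatives in $(f,g)$. Since $\Omega^*$ itself is a solution at every $(0,f,g)$ by the first substep, local uniqueness forces $\Omega(0,f,g)\equiv\Omega^*$; consequently both $\Omega-\Omega^*$ and its Gateaux derivatives $\partial_f\Omega,\partial_g\Omega$ vanish identically at $\varepsilon=0$. Hadamard's lemma applied in the $\varepsilon$-variable then defines
\begin{equation*}
  \Omega_\varepsilon(\varepsilon,f,g):=\int_0^1\partial_\varepsilon\Omega(t\varepsilon,f,g)\,dt,
\end{equation*}
a continuous function with $\Omega(\varepsilon,f,g)=\Omega^*+\varepsilon\Omega_\varepsilon(\varepsilon,f,g)$; applying the same lemma to $\partial_f\Omega/\varepsilon$ and $\partial_g\Omega/\varepsilon$ yields the corresponding continuity of $\partial_f\Omega_\varepsilon$ and $\partial_g\Omega_\varepsilon$. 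The chief obstacle I anticipate is not the algebra but verifying the refined hypotheses of the implicit function theorem — in particular sufficient joint regularity in $\varepsilon$ uniformly over $V^r$ — which is where one invokes the decompositions \eqref{2-13}--\eqref{2-19} that cleanly separate the genuinely singular $O(1/\varepsilon)$ contributions from the regular remainders $\mathcal{R}_{ij}$.
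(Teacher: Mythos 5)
Your reduction of the $\mathcal B^k$-membership to the single scalar constraint $\Phi(\varepsilon,\Omega,f,g)=\int\!\!\!\!\!\!\!\!\!\;{}-{} F_1\sin(x)\,dx+\int\!\!\!\!\!\!\!\!\!\;{}-{} F_2\cos(x)\,dx=0$ is exactly the paper's starting point, and your two computations at $\varepsilon=0$ are correct: the first-mode contributions $(b_1-a_1)/4$ and $(a_1-b_1)/4$ do cancel, and $\partial_\Omega\Phi=d\neq0$. Where you diverge is in how you produce the factorization $\Omega=\Omega^*+\varepsilon\,\Omega_\varepsilon$, and that is where there is a genuine gap. Hadamard's lemma, as you use it, requires $\Omega(\cdot,f,g)$ to be $C^1$ in $\varepsilon$, hence requires $\partial_\varepsilon\Phi$ (and so $\partial_\varepsilon F_1,\partial_\varepsilon F_2$) to exist and be continuous; Lemmas \ref{lem2-1} and \ref{lem2-2} only give continuity in $\varepsilon$ and Gateaux differentiability in $(f,g)$, and you never establish differentiability in $\varepsilon$. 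The problem compounds at the last step: writing $\partial_f\Omega_\varepsilon$ as a Hadamard quotient of $\partial_f\Omega$ needs the mixed derivative $\partial_\varepsilon\partial_f\Omega$, i.e.\ still more joint regularity. You flag this as ``the chief obstacle'' but do not resolve it, so as written the continuity of $\Omega_\varepsilon$ at $\varepsilon=0$ and of its Gateaux derivatives is unproved.

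The paper avoids this entirely, and the fix is the one you gesture at in your last sentence: the expansions \eqref{2-10}, \eqref{2-13}--\eqref{2-15} and \eqref{2-16}--\eqref{2-19} already isolate the leading terms with \emph{explicit} remainders of the form $\varepsilon\mathcal R_{ij}(\varepsilon,f,g)$, with each $\mathcal R_{ij}$ continuous and Gateaux-differentiable. Substituting these into $\Phi=0$, the $a_1$-terms cancel and the constraint becomes the affine scalar equation $2\Omega d=2\Omega^*d+\varepsilon(\cdots)$, which you solve by division rather than by the implicit function theorem; the factor $\varepsilon$ is then algebraically manifest, $\Omega_\varepsilon$ is an explicit combination of $\int\!\!\!\!\!\!\!\!\!\;{}-{}\mathcal R_{1i}\sin$ and $\int\!\!\!\!\!\!\!\!\!\;{}-{}\mathcal R_{2i}\cos$, and its continuity together with that of $\partial_f\Omega_\varepsilon$, $\partial_g\Omega_\varepsilon$ is inherited directly from Lemmas \ref{lem2-1} and \ref{lem2-2} with no regularity in $\varepsilon$ beyond continuity. (This explicit route also gives $\Omega_\varepsilon$ on all of $(-1/2,1/2)\times V^r$ at once, whereas your local IFT argument would a priori only produce it near $(0,0,0)$.) I recommend you replace the IFT--Hadamard step by this explicit solution; everything before it can stand.
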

\begin{proof}
	According to the definition of $\mathcal B^k$, it is sufficient to find $\Omega=\Omega(\varepsilon,f,g)$ such that
	\begin{equation}\label{2-25}
		\int\!\!\!\!\!\!\!\!\!\; {}-{} F_1(\varepsilon, \Omega, f,g)\sin(x) dx=-\int\!\!\!\!\!\!\!\!\!\; {}-{} F_2(\varepsilon, \Omega, f,g)\cos(x) dx
	\end{equation}
	for $\varepsilon$ sufficiently small and $(f,g)\in V^r$. Let
	 \begin{equation*}
		f=\sum_{j=1}^\infty a_j\cos(jx) \ \ \ \text{and} \ \ \ g=-a_1\cos(x)+\sum_{j=1}^\infty b_j\cos(jx).
	\end{equation*}
	Then by Lemma \ref{lem2-1}, Lemma \ref{lem2-3} and \eqref{2-24}, identity \eqref{2-25} is equivalent to
	\begin{equation*}
		\begin{split}
		\Omega d&-a_1+\int\!\!\!\!\!\!\!\!\!\; {}-{} \sum_{1=1}^4 \varepsilon \mathcal R_{1i}(\varepsilon,f,g)\sin(x) dx-\Omega^*d\\
		&=-\big(\Omega d+a_1-\int\!\!\!\!\!\!\!\!\!\; {}-{} \sum_{1=1}^4 \varepsilon \mathcal R_{2i}(\varepsilon,f,g)\cos(x) dx-\Omega^*d\big).
		\end{split}
	\end{equation*}
    So we deduce that
    \begin{equation*}
    	\Omega(\varepsilon,f,g)=\Omega^*-\frac{\varepsilon}{2d}\int\!\!\!\!\!\!\!\!\!\; {}-{} \sum_{1=1}^4 \mathcal R_{1i}(\varepsilon,f,g)\sin(x) dx+\frac{\varepsilon}{2d}\int\!\!\!\!\!\!\!\!\!\; {}-{} \sum_{1=1}^4 \mathcal R_{2i}(\varepsilon,f,g)\cos(x) dx.
    \end{equation*}
	Let
	\begin{equation*}
		\Omega_\varepsilon(\varepsilon,f,g):=-\frac{1}{2d}\int\!\!\!\!\!\!\!\!\!\; {}-{} \sum_{1=1}^4 \mathcal R_{1i}(\varepsilon,f,g)\sin(x) dx+\frac{1}{2d}\int\!\!\!\!\!\!\!\!\!\; {}-{} \sum_{1=1}^4 \mathcal R_{2i}(\varepsilon,f,g)\cos(x) dx.
	\end{equation*}
	From the discussion in Lemma \ref{lem2-2}, it is easy to see that $\partial_f\Omega_\varepsilon(\varepsilon,f,g)(\cdot):X^{k+1}\to \mathbb R$ and  $\partial_g\Omega_\varepsilon(\varepsilon,f,g)(\cdot):X^k\to \mathbb R$ are continuous. Thus the proof is therefore completed.	
\end{proof}

Having made all the preparations in the previous Lemmas, we are now able to prove Theorem \ref{thm1}.

{\bf Proof of Theorem \ref{thm1}:}
We first show that $D \mathbf{F^*}(\varepsilon,f,g)(\cdot, \cdot): \mathcal A^k \to \mathcal B^k$ is an isomorphism. By the chain rule,  we have for $i=1,2$
\begin{equation*}
	\partial_f F_i^*(0,0,0)h_1=\partial_\Omega F_i(0,\Omega^*,0,0)\partial_f\Omega(0,0,0)h_1+\partial_f F_1(0,\Omega^*,0,0)h_1, \ \ \ \forall \, h_1\in X^{k+1},
\end{equation*}
and
\begin{equation*}
	\partial_g F_i^*(0,0,0)h_2=\partial_\Omega F_i(0,\Omega^*,0,0)\partial_g\Omega(0,0,0)h_2+\partial_g F_1(0,\Omega^*,0,0)h_2, \ \ \ \ \forall \, h_2\in X^k.
\end{equation*}
On the other hand, using the formulation of angular velocity $\Omega$ in Lemma \ref{lem2-4}, we deduce that $\partial_f\Omega(0,0,0)h_1=\partial_{g}\Omega(0,0,0)h_2\equiv0$ for any $h_1\in X^{k+1}$ and $h_2\in X^k$. Hence for $i=1,2$, it should be
\begin{equation*}
\partial_f F_i^*(0,0,0)(\cdot)=\partial_f F_i(0,\Omega^*,0,0)(\cdot) \ \ \ \text{and} \ \ \ \partial_g F_i^*(0,0,0)(\cdot)=\partial_g F_i(0,\Omega^*,0,0)(\cdot),
\end{equation*}
and we have achieved our target by Lemma \ref{lem2-3}.

According to Lemma \ref{lem2-1} -- Lemma \ref{lem2-4}, we can apply implicit function theorem to derive that there exists $\varepsilon_0>0$ such that
\begin{equation*}
	\left\{(\varepsilon,f,g)\in [-\varepsilon_0,\varepsilon_0]\times V^r \ : \ \mathbf{F^*}(\varepsilon,f,g)=0\right\}
\end{equation*}
is parameterized by one-dimensional curve $\varepsilon\in [-\varepsilon_0,\varepsilon_0]\to (\varepsilon, f_\varepsilon,g_\varepsilon)$. Moreover, it is not difficult to verify that for $\varepsilon\in [-\varepsilon_0,\varepsilon_0]$,
\begin{equation*}
	-\frac{1}{2d}\int\!\!\!\!\!\!\!\!\!\; {}-{} \sum_{1=1}^4 \mathcal R_{1i}(\varepsilon,f_\varepsilon,g_\varepsilon)\sin(jx) dx+\frac{1}{2d}\int\!\!\!\!\!\!\!\!\!\; {}-{} \sum_{1=1}^4 \mathcal R_{2i}(\varepsilon,f_\varepsilon,g_\varepsilon)\cos(jx) dx \not\equiv 0, \ \ \ \exists \, j\in \mathbb N^*.
\end{equation*}
So we have actually obtained a family of nontrivial vortex sheet solutions.

To show the convexity of the area surrounded by the closed curve corresponding to the vortex sheet, we use direct computation. Indeed,
\begin{align*}
	\varepsilon \kappa(x)=\frac{R(x)^2+2R'(x)^2-R(x)R''(x)}{\left(R(x)^2+R'(x)^2\right)^{\frac{3}{2}}}=\frac{1+O(\varepsilon)}{1+O(\varepsilon)}>0,
\end{align*}
for $\varepsilon$ small and each $x\in[0,2\pi)$. Hence we claim that every component of vorticity is supported on a closed curve with convex interior. Since $\mathcal A^k$ and $\mathcal B^k$ are analytic spaces, this curve is also of $C^\infty$.

Let $\bar f(x)=f(-x)$ and $\bar g(x)=g(-x)$. The last thing to be verified is that if $(\varepsilon,f,g)$ is a solution to $\mathbf {F^*}(\varepsilon,f,g)=0$, then $(-\varepsilon, \bar f,\bar g)$ is also a solution. By replacing $y$ with $-y$ in the integral representation of $\mathbf F(\varepsilon,\Omega,f,g)$ and using the fact that $f,g$ are both even functions, we derive $\Omega_\varepsilon(\varepsilon,f)=\Omega_\varepsilon(-\varepsilon,\bar f,\bar g)$. Then we can insert this relationship into $\mathbf F(\varepsilon,\Omega,f,g)$ and obtain $\mathbf{F^*}(-\varepsilon, \bar f,\bar g)=0$ by a similar substitution of variables as before.

Thus we have finished the proof of Theorem \ref{thm1}.\qed

\section{Existence of traveling vortex sheets}
As before, let $\varepsilon\in (-\frac{1}{2},\frac{1}{2})$, $R(x)=1+\varepsilon f(x)$, $\gamma(x)=1+\varepsilon g(x)$
with $x\in [0,2\pi)$, and $f, g$ be two $C^1$ functions. By \eqref{1-15}, we need to find the zero points of following two functionals
\begin{equation*}
	G_1(\varepsilon,W,f,g)=G_{11}+G_{12}+G_{13}+G_{14},
\end{equation*}
where
\begin{equation*}
	G_{11}=-W\left((1+\varepsilon f(x))\sin(x)-\varepsilon f'(x)\cos(x)\right),
\end{equation*}

\begin{equation*}
	\begin{split}
		G_{12}&=\frac{1}{\varepsilon}\int\!\!\!\!\!\!\!\!\!\; {}-{} \frac{\varepsilon(1+\varepsilon f(x))f'(x)(1-\cos(x-y))(1+\varepsilon g(y))}{ \varepsilon^2\left(f(x)-f(y)\right)^2+4(1+\varepsilon f(x))(1+\varepsilon f(y))\sin^2\left(\frac{x-y}{2}\right)} dy\\
		& \ \ \ +\frac{1}{\varepsilon}\int\!\!\!\!\!\!\!\!\!\; {}-{} \frac{\varepsilon^2 f'(x)(f(x)-f(y))\cos(x-y)(1+\varepsilon g(y))}{ \varepsilon^2\left(f(x)-f(y)\right)^2+4(1+\varepsilon f(x))(1+\varepsilon f(y))\sin^2\left(\frac{x-y}{2}\right)}dy\\
		& \ \ \ -\frac{1}{\varepsilon}\int\!\!\!\!\!\!\!\!\!\; {}-{} \frac{\varepsilon(1+\varepsilon f(x))(f(x)-f(y))\sin(x-y)(1+\varepsilon g(y))}{ \varepsilon^2\left(f(x)-f(y)\right)^2+4(1+\varepsilon f(x))(1+\varepsilon f(y))\sin^2\left(\frac{x-y}{2}\right)}dy\\
		& \ \ \ +\frac{1}{\varepsilon}\int\!\!\!\!\!\!\!\!\!\; {}-{} \frac{(1+\varepsilon f(x))^2\sin(x-y)(1+\varepsilon g(y))}{ \varepsilon^2\left(f(x)-f(y)\right)^2+4(1+\varepsilon f(x))(1+\varepsilon f(y))\sin^2\left(\frac{x-y}{2}\right)}dy,\\
	\end{split}
\end{equation*}

\begin{equation*}
	\begin{split}
		G_{13}&=-\int\!\!\!\!\!\!\!\!\!\; {}-{} \frac{\varepsilon(1+\varepsilon f(x))f'(y)(1+\varepsilon g(y))}{\left|(\varepsilon R(x)\cos(x)+\varepsilon R(y)\cos(y)-2d)^2+(\varepsilon R(x)\sin(x)+\varepsilon R(y)\sin(y))^2\right|^2}dy\\
		&\ \ \ -\int\!\!\!\!\!\!\!\!\!\; {}-{} \frac{\varepsilon f'(x)(1+\varepsilon f(y))\cos(x-y)(1+\varepsilon g(y))}{\left|(\varepsilon R(x)\cos(x)+\varepsilon R(y)\cos(y)-2d)^2+(\varepsilon R(x)\sin(x)+\varepsilon R(y)\sin(y))^2\right|^2}dy\\
		& \ \ \ +\int\!\!\!\!\!\!\!\!\!\; {}-{} \frac{(1+\varepsilon f(x))(1+\varepsilon f(x))\sin(x-y)(1+\varepsilon g(y))}{\left|(\varepsilon R(x)\cos(x)+\varepsilon R(y)\cos(y)-2d)^2+(\varepsilon R(x)\sin(x)+\varepsilon R(y)\sin(y))^2\right|^2}dy,
	\end{split}
\end{equation*}

\begin{equation*}
	\begin{split}
		G_{14}&=-\int\!\!\!\!\!\!\!\!\!\; {}-{} \frac{2d(1+\varepsilon f(x))\sin(x)(1+\varepsilon g(y))}{\left|(\varepsilon R(x)\cos(x)+\varepsilon R(y)\cos(y)-2d)^2+(\varepsilon R(x)\sin(x)+\varepsilon R(y)\sin(y))^2\right|^2}dy\\
		& \ \ \ +\int\!\!\!\!\!\!\!\!\!\; {}-{} \frac{2\varepsilon df'(x)\cos(x)(1+\varepsilon g(y))}{\left|(\varepsilon R(x)\cos(x)+\varepsilon R(y)\cos(y)-2d)^2+(\varepsilon R(x)\sin(x)+\varepsilon R(y)\sin(y))^2\right|^2}dy,
	\end{split}
\end{equation*}
and
\begin{equation*}
	G_2(\varepsilon,W,f,g)=(I-\mathcal P)\tilde G_2=(I-\mathcal P)(\tilde G_{21}+\tilde G_{22}+\tilde G_{23}+\tilde G_{24}),
\end{equation*}
where
\begin{equation*}
	\tilde G_{21}=-\frac{W(1+\varepsilon g(x))}{\varepsilon^2f'(x)^2+(1+\varepsilon f(x))^2}\cdot\big((1+\varepsilon f(x))\cos(x)+\varepsilon f'(x)\sin(x)\big),
\end{equation*}

\begin{equation*}
	\begin{split}
		\tilde G_{22}&=\frac{1}{\varepsilon}\cdot\frac{(1+\varepsilon g(x))}{\varepsilon^2f'(x)^2+(1+\varepsilon f(x))^2}\int\!\!\!\!\!\!\!\!\!\; {}-{} \frac{(1+\varepsilon f(x))(1+\varepsilon f(y))(\cos(x-y)-1)(1+\varepsilon g(y))}{ \varepsilon^2\left(f(x)-f(y)\right)^2+4(1+\varepsilon f(x))(1+\varepsilon f(y))\sin^2\left(\frac{x-y}{2}\right)}dy\\
		& \ \ \ -\frac{1}{\varepsilon}\cdot\frac{(1+\varepsilon g(x))}{\varepsilon^2f'(x)^2+(1+\varepsilon f(x))^2}\int\!\!\!\!\!\!\!\!\!\; {}-{} \frac{\varepsilon^2 f'(x)(f(x)-f(y))\sin(x-y)(1+\varepsilon g(y))}{\varepsilon^2\left(f(x)-f(y)\right)^2+4(1+\varepsilon f(x))(1+\varepsilon f(y))\sin^2\left(\frac{x-y}{2}\right)}dy\\
		& \ \ \ -\frac{1}{\varepsilon}\cdot\frac{(1+\varepsilon g(x))}{\varepsilon^2f'(x)^2+(1+\varepsilon f(x))^2}\int\!\!\!\!\!\!\!\!\!\; {}-{} \frac{\varepsilon(1+\varepsilon f(x))(f(x)-f(y))(1+\varepsilon g(y))}{\varepsilon^2\left(f(x)-f(y)\right)^2+4(1+\varepsilon f(x))(1+\varepsilon f(y))\sin^2\left(\frac{x-y}{2}\right)}dy\\
		& \ \ \ +\frac{1}{\varepsilon}\cdot\frac{(1+\varepsilon g(x))}{\varepsilon^2f'(x)^2+(1+\varepsilon f(x))^2}\int\!\!\!\!\!\!\!\!\!\; {}-{} \frac{\varepsilon f'(x)(1+\varepsilon f(x))\sin(x-y)(1+\varepsilon g(y))}{\varepsilon^2\left(f(x)-f(y)\right)^2+4(1+\varepsilon f(x))(1+\varepsilon f(y))\sin^2\left(\frac{x-y}{2}\right)}dy,\\
	\end{split}
\end{equation*}

\begin{equation*}
	\begin{split}
		&\tilde G_{23}=\frac{(1+\varepsilon g(x))}{\varepsilon^2f'(x)^2+(1+\varepsilon f(x))^2}\int\!\!\!\!\!\!\!\!\!\; {}-{} \frac{(1+\varepsilon f(x))(1+\varepsilon f(y))\cos(x-y)(1+\varepsilon g(y))dy}{\left|(\varepsilon R(x)\cos(x)+\varepsilon R(y)\cos(y)-2d)^2+(\varepsilon R(x)\sin(x)+\varepsilon R(y)\sin(y))^2\right|^2}\\
		& +\frac{(1+\varepsilon g(x))}{\varepsilon^2f'(x)^2+(1+\varepsilon f(x))^2}\int\!\!\!\!\!\!\!\!\!\; {}-{} \frac{\varepsilon f'(x)(1+\varepsilon f(y))\sin(x-y)(1+\varepsilon g(y))dy}{\left|(\varepsilon R(x)\cos(x)+\varepsilon R(y)\cos(y)-2d)^2+(\varepsilon R(x)\sin(x)+\varepsilon R(y)\sin(y))^2\right|^2}\\
		& +\frac{(1+\varepsilon g(x))}{\varepsilon^2f'(x)^2+(1+\varepsilon f(x))^2}\int\!\!\!\!\!\!\!\!\!\; {}-{} \frac{(1+\varepsilon f(x))^2(1+\varepsilon g(y))dy}{\left|(\varepsilon R(x)\cos(x)+\varepsilon R(y)\cos(y)-2d)^2+(\varepsilon R(x)\sin(x)+\varepsilon R(y)\sin(y))^2\right|^2},
	\end{split}
\end{equation*}

\begin{equation*}
	\begin{split}
		&\tilde G_{24}=\frac{(1+\varepsilon g(x))}{\varepsilon^2f'(x)^2+(1+\varepsilon f(x))^2}\int\!\!\!\!\!\!\!\!\!\; {}-{} \frac{2d(1+\varepsilon f(x))\cos(x)(1+\varepsilon g(y))dy}{\left|(\varepsilon R(x)\cos(x)+\varepsilon R(y)\cos(y)-2d)^2+(\varepsilon R(x)\sin(x)+\varepsilon R(y)\sin(y))^2\right|^2}\\
		& +\frac{(1+\varepsilon g(x))}{\varepsilon^2f'(x)^2+(1+\varepsilon f(x))^2}\int\!\!\!\!\!\!\!\!\!\; {}-{} \frac{2\varepsilon df'(x)\sin(x)(1+\varepsilon g(y))dy}{\left|(\varepsilon R(x)\cos(x)+\varepsilon R(y)\cos(y)-2d)^2+(\varepsilon R(x)\sin(x)+\varepsilon R(y)\sin(y))^2\right|^2}.
	\end{split}
\end{equation*}

{\bf Proof of Theorem \ref{thm2}:}
Comparing $G_1,G_2$ with $F_1,F_2$ for co-rotating sheet solutions, we observe that $G_{12}$, $\tilde G_{22}$ equal $F_{12}$, $\tilde F_{22}$ given in \eqref{2-3} and \eqref{2-7} separately. Moreover, for $i=3,4$, $G_{1i}$, $\tilde G_{2i}$ equal $-F_{1i}$, $-\tilde F_{2i}$ in the case $m=2$. Hence by a same procedure as seeking for the regularity of $F_1$, $F_2$, we can prove the continuity of $\mathbf G=(G_1, G_2)$ and their Gateaux derivatives $D\mathbf G$ near $(0,W,0,0)$. From Lemma \ref{lem2-3}, we can obtain the exact matrix form for $D\mathbf G(0,W,0,0)$, which is an isomorphism from $\mathcal A^k$ to $\mathcal B^k$.

 It remains to choose $W$ appropriately such that for $(\varepsilon,W,f,g)\in [-\varepsilon_0,\varepsilon_0]\times \mathbb R \times V^r$ with $\varepsilon_0,r>0$ small enough, the range of $\mathbf G=(G_1, G_2)$ is in $\mathcal B^k$. This can be achieved by letting
\begin{equation*}
	W=W^*+\varepsilon\cdot W_\varepsilon(\varepsilon,f,g)
\end{equation*}
with $W^*=\frac{1}{2d}$ and some regular perturbation $W_\varepsilon(\varepsilon,f,g)$. Then we can apply implicit function theorem on $\mathbf{G^*}(\varepsilon,f,g)$ at $(0,0,0)$ to derive the existence and properties for a local curve of nontrivial traveling vortex sheet solutions.
\qed

\phantom{s}
\thispagestyle{empty}

\end{document}